
\documentclass[11pt, a4paper,leqno]{amsart}
 
\usepackage{graphicx}
\usepackage{amsfonts}
\usepackage{amsmath,amssymb,amsthm,amscd,cancel,multicol,stackrel, graphics}
\usepackage{arydshln}
\usepackage{memhfixc}
\usepackage{latexsym}
\usepackage{tikz}
\usepackage{enumerate}
\usepackage{esint}
\usepackage{url}
\usepackage[colorlinks=true, pdfstartview=FitV, linkcolor=blue, citecolor=red, urlcolor=blue]{hyperref} 

\calclayout
\allowdisplaybreaks
%
%
%

\newcommand{\mfa}{\mathfrak{a}}
\newcommand{\mfb}{\mathfrak{b}}
\newcommand{\mfc}{\mathfrak{c}}
%
%

%
%

%
%

%
%

\newcommand{\al}{\alpha}
\newcommand{\be}{\beta}
\newcommand{\ga}{\gamma}


%
%

\DeclareMathOperator{\codim}{codim}

\DeclareMathOperator{\lcm}{lcm}

\DeclareMathOperator{\projdim}{proj\,dim}

\DeclareMathOperator{\reg}{reg}

\DeclareMathOperator{\pnt}{\raise 0.5mm \hbox{\large\bf.}}

%
%

\theoremstyle{plain}
\newtheorem{theorem}[equation]{Theorem}
\newtheorem{lemma}[equation]{Lemma}
\newtheorem{corollary}[equation]{Corollary}
\newtheorem{proposition}[equation]{Proposition}
\newtheorem{conjecture}[equation]{Conjecture}

\newtheorem{algorithm}[equation]{Algorithm}

\theoremstyle{definition}
\newtheorem{definition}[equation]{Definition}

\newtheorem{problem}[equation]{Problem}
\newtheorem{example}[equation]{Example}

\theoremstyle{remark}
\newtheorem{remark}[equation]{Remark}


%
%
\textwidth=16.00cm
\textheight=24.00cm
\topmargin=-1.00cm
\oddsidemargin=0.25cm
\evensidemargin=0.25cm
\headheight=0.3cm
\headsep=0.5cm

\numberwithin{equation}{section}


%
%

 \newcount\HOUR
 \newcount\MINUTE
 \newcount\HOURSINMINUTES
 \newcount\INTVAL
 \newcommand{\twodigit}[1]{\INTVAL=#1\relax\ifnum\INTVAL<10 0\fi\the\INTVAL}
 \HOUR=\time\divide\HOUR by 60\relax
 \HOURSINMINUTES=\HOUR\multiply\HOURSINMINUTES by 60\relax
 \MINUTE=\time\advance\MINUTE by -\HOURSINMINUTES\relax
 \newcommand\rightnow{
             \twodigit{\the\HOUR}:\twodigit{\the\MINUTE},
           \twodigit{\number\day}.\space
             \ifcase\month\or January\or February\or March\or April\or 
 May\or June\or July\or August\or September\or October\or November\or 
 December\fi
            \space\number\year}


\title{Boij-S\"{o}derberg Decompositions of Lex-segment Ideals} 

\author{Sema G\"unt\"urk\"un}
\email{gunturku@umich.edu}
\address{Department of Mathematics,
University of Michigan,
2074 East Hall, 530 Church Street
Ann Arbor, MI  48109-1043\\
 USA}

\keywords{Boij-S\"{o}derberg decompositions, Betti diagrams, Lex-segment ideals}
\subjclass[2010]{13D02, 13C99, 13A99}

\begin{document}
\allowdisplaybreaks

\begin{abstract}
Boij-S\"oderberg theory describes the scalar multiples of Betti diagrams of graded modules over a polynomial ring as a linear combination of pure diagrams 
with positive coefficients. There are a few results that describe Boij-S\"oderberg decompositions explicitly. In this paper, we focus on the Betti diagrams of lex-segment ideals. Mainly, we characterize the Boij-S\"oderberg decomposition of a lex-segment ideal and describe it by using Boij-S\"oderberg decompositions of some other related lex-segment ideals. 
 \end{abstract}

\maketitle 

\section{Introduction}
\label{intro}
Boij-S\"oderberg is very recent theory which addresses the characterization of Betti diagrams of graded modules in polynomial rings.
It is originated in a pair of conjectures by Boij and S\"oderberg in \cite{BS} and in \cite{BS2} whose proof is given by Eisenbud and Schreyer in \cite{ES}. 
Their result gives a characterization of Betti diagrams of graded modules up to scalar multiples. For more information about Boij-S\"oderberg theory, we refer to a very informative survey written by Fl{\o}ystad  in \cite{F}. This theory brings up an idea of decomposition of the Betti diagrams of graded modules whose resolutions are not pure resolution. If the resolution is pure, then the decomposition consists of only one pure diagram with a positive coefficient as expected. There is not much known 
about the behavior of the Boij-S\"oderberg decomposition of an ideal in a polynomial ring.  Any characterization of Boij-S\"oderberg decompositions, either about Boij-S\"oderberg coefficients or  about the chain of the degree sequences associated with the pure diagrams, would also assist us in understanding and interpreting the  structural consequences of Boij-S\"oderberg decomposition of the Betti diagrams. Although the theory is quite recent and has a lot of open problems, improvements and contributions to this theory are quite impressive. Cook in \cite{Cook} and Berkesch, Erman, Kumini, and Sam in \cite{BerErmKumSam} discuss Boij-S\"oderberg theory in the perspective of poset structures. In \cite{NaS}, Nagel and Sturgeon examine the Boij-S\"oderberg decomposition of some ideals that are raised from some combinatorial objects. They show the combinatorial importance of the coefficients of the pure diagrams in the Boij-S\"oderberg decompositions of their interest of ideals. In \cite{GJMRSW}, results of  Gibbons, Jeffries, Mayes, Rauciu, Stone and White provide a relation between decomposition of the Betti diagrams of complete intersections and degrees of their minimal generators. Another recent work is done by Francisco, Mermin, and Schweig in \cite{FMS}. In their paper, they study the behavior of the Boij-S\"oderberg coefficients of Borel ideals.

For the sake of simplicity, the abbreviation BS is used for the Boij-S\"oderberg . In this paper, we study the behavior of BS decompositions of lex-segment ideals and obtain a neat relation between the BS decompositions of a given lex ideal and some other related lex ideals. Throughout the paper, our main focus will be the BS chain of the degree sequences in the decomposition and we also provide a strong correlation of the coefficients of the pure diagrams as well. The reason of why we are interested in the BS decomposition of lex-segment ideals is based on the fact that lex ideals have very particular Betti diagrams. The Bigatti-Hulett-Pardue, in \cite{Bi,Hu,Pa},  prove that the lex-segment ideals have the largest Betti 
numbers among the ideals with the same Hilbert function. This pivotal property of lex-segment ideals makes their BS decompositions worthy of study. Moreover, Eliahou-Kervaire formula gives a nice formulation for the Betti diagram of lex ideals. The main goal is to obtain a pattern for the BS decomposition of a lex ideal by using decompositions of some other related lex-segment ideals.

In what follows, let $R = \mathbf{k}[x,y,z]$ be a polynomial ring of $3$ variables, with the lexicographic order, $x >_{lex} y >_{lex} z$ and  $L$ be a lex-segment ideal in $R$. The ideal $L$ can be decomposed as $L = x\mfa+J$ where $\mfa$ is also a lex-segment ideal in $R$ and $J$ is a lex-segment ideal in $\mathbf{k}[y,z]$. The first main result of this paper describes the ``beginning" of the BS decomposition of $L$ in terms of the decomposition of $\mfa$. The algorithm of BS decomposition itself provides a chain of degree sequences. The first degree sequence in the chain is the top degree sequence of the Betti diagram of $L$. By the algorithm, the second degree sequences is the top degree sequence of the remaining diagram after the subtraction of the first pure diagram with a suitable coefficient from the Betti diagram. It continues until the Betti diagram is decomposed completely. 
Thus, by saying that the ``beginning" of the BS decomposition, we mean the several degree sequences, that is pure diagrams, are obtained in the beginning of the BS decomposition of $L$. We now state our first result.
\begin{theorem}\label{thm1} 
Let $R = \mathbf{k}[x,y,z]$ and $L$ be a lex-segment ideal of codimension $3$ in $R$. Suppose $1\neq \mfa = L:(x)$. Write the Boij-S\"{o}derberg decomposition of $\mfa$ as
\[\be(\mfa) = \sum\limits_{i=0}^{t} \al_i \pi_{{\bf d^i}}  + R_{\mfa},\] 
where  $\bf d^0 < d^1 < ...<d^l<... < d^t$ are all top degree sequences  of length $3$, that is,  
${\bf d^i} = (d_0^i, d_1^i,d_2^i)$ for $i = 0,1,...,t$, and $R_{\mfa}$ is the linear combination of the pure diagrams greater that $\pi_{{\bf d^t}}$. Then the Boij-S\"oderberg decomposition of $L$ has the form
\[\be(L) = \sum\limits_{i=0}^{t} \tilde{\al}_i \pi_{{\bf \bar{d}^i}}  + R_{L}
\] 
where
${\bf \bar{d}^i} = {\bf d^i + 1} = (d_0^i +1, d_1^i +1,d_2^i +1)$,  and $\tilde{\al}_i = \al_i$ \ for $i = 0,1,...,t$ 
and $\tilde{\al}_t\geq\al_t$, and $R_{L}$ is a linear combination of pure diagrams greater than $\pi_{\bf \bar{d}^t}$.
%
\end{theorem}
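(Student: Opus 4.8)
The strategy is to relate the Betti diagram of $L = x\mfa + J$ to those of $\mfa$ and $J$ via a mapping-cone / short-exact-sequence argument, and then to run the Boij--Söderberg algorithm in parallel on $\be(\mfa)$ and $\be(L)$. First I would set up the short exact sequence
\[
0 \longrightarrow (R/\mfa)(-1) \stackrel{\cdot x}{\longrightarrow} R/L \longrightarrow R/(L,x) \longrightarrow 0,
\]
noting that $(L,x) = (x) + J'$ where $J'$ is the extension to $R$ of the lex ideal $J$ in $\mathbf{k}[y,z]$, so $R/(L,x) \cong \mathbf{k}[y,z]/J$ has projective dimension at most $2$ over that ring (codimension $\leq 2$). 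The key numerical input is the Eliahou--Kervaire formula, which lets me write $\be(L)$ explicitly and verify that the part of $\be(L)$ coming from the $(R/\mfa)(-1)$ term is exactly $\be(\mfa)$ with every entry shifted up by one homological-degree-wise column label; that is, $\be_{i,j}(L) = \be_{i-1,j-1}(\mfa) + (\text{contribution of } J)$, and the $J$-contribution sits in homological degrees $2$ and $3$ only, in a controlled range of internal degrees. The shift $\mathbf{d} \mapsto \mathbf{d}+\mathbf{1}$ on degree sequences of length $3$ is the reflection of precisely this column/row shift at the level of pure diagrams: $\pi_{\mathbf{d}+\mathbf{1}}$ is $\pi_{\mathbf{d}}$ with each entry placed one row lower.

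Next I would run the greedy BS algorithm. The top (i.e.\ minimal, in the Boij--Söderberg partial order) degree sequence of $\be(L)$ of length $3$ is read off from the first nonzero entries in each column; by the explicit Eliahou--Kervaire description and the fact that the $J$-contribution only adds entries in internal degrees that are $\geq$ the internal degrees appearing in the $x\mfa$ part (because $J$ sits below $x\mfa$ in the lex order, its generators have higher degree), this top sequence is exactly $\bar{\mathbf{d}}^0 = \mathbf{d}^0 + \mathbf{1}$, with the extremal Betti number in column $0$ being literally $\be_{0,d_0^0+1}(L) = \be_{0,d_0^0}(\mfa)$. The coefficient $\tilde\al_0$ extracted by the algorithm is the minimum over the three columns of (Betti entry)/(pure-diagram entry); since the pure-diagram entries for $\pi_{\bar{\mathbf{d}}^0}$ and $\pi_{\mathbf{d}^0}$ are identical (the pure diagram only depends on the differences $d_1 - d_0$, $d_2 - d_0$, which are unchanged by adding $\mathbf{1}$), and since in the columns where the minimum for $\mfa$ is attained the Betti entries of $L$ and $\mfa$ agree (the $J$-contribution being absent or not decreasing the ratio there), we get $\tilde\al_0 = \al_0$. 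Subtracting $\al_0 \pi_{\bar{\mathbf{d}}^0}$ from $\be(L)$ leaves a diagram whose length-$3$ part is $\be(\mfa) - \al_0\pi_{\mathbf{d}^0}$ shifted by $\mathbf{1}$, plus the same $J$-contribution. I would then induct: assuming the first $i$ steps have produced $\tilde\al_j = \al_j$ and $\bar{\mathbf{d}}^j = \mathbf{d}^j + \mathbf{1}$ for $j < i$, the same comparison of ratios gives step $i$. The induction proceeds cleanly for $i = 0, \ldots, t-1$.

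The last step $i = t$ is where the inequality $\tilde\al_t \geq \al_t$ rather than equality appears, and this is the main obstacle to handle carefully. At this stage the residual diagram for $\mfa$ is $R_{\mfa}$, a combination of pure diagrams with degree sequences strictly above $\mathbf{d}^t$; correspondingly the residual for $L$ is (shift of $R_{\mfa}$) plus the leftover $J$-contribution. Now $\mathbf{d}^t$ is the \emph{last} length-$3$ top sequence for $\mfa$, meaning $R_{\mfa}$ has no length-$3$ pure summand with top sequence $\mathbf{d}^t$ — but the $J$-contribution, living in columns $2$ and $3$, can add to the column-$1$ and column-$2$ entries of $L$'s residual diagram precisely at the internal degrees $\bar{d}_1^t, \bar{d}_2^t$, which can only \emph{increase} the ratio (Betti entry)/(pure-diagram entry) in exactly those columns, hence can only increase — never decrease — the minimum that defines $\tilde\al_t$. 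So $\tilde\al_t \geq \al_t$, with equality iff the $J$-contribution does not touch the column where $\mfa$'s minimum was attained. After extracting $\pi_{\bar{\mathbf{d}}^t}$ with coefficient $\tilde\al_t$, whatever remains has all its length-$3$ degree sequences strictly above $\bar{\mathbf{d}}^t$ (the shift of $R_{\mfa}$ contributes such, and the greedy algorithm from here on only goes up), so it is absorbed into $R_L$. The one technical point I would need to nail down is that adding the $J$-contribution never creates a \emph{new} length-$3$ top sequence strictly between $\bar{\mathbf{d}}^{i}$ and $\bar{\mathbf{d}}^{i+1}$ for $i<t$ — this follows because $J$'s generators and syzygies occupy internal degrees that, after the shift, coincide with or exceed those already present from the $x\mfa$ part in columns $2,3$, so no genuinely new "corner" is introduced in the staircase that the algorithm walks down; verifying this via the Eliahou--Kervaire numerics for $\mathbf{k}[y,z]/J$ (a length-$\leq 2$ situation, hence very explicit) is the concrete computation underlying the whole argument.
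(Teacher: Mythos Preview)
Your overall plan---express $\be(L)$ as a shifted copy of $\be(\mfa)$ plus a $J$-contribution, then run the Boij--S\"oderberg algorithm in parallel on the two diagrams---is exactly the paper's approach. But two of your structural claims are wrong and would derail the execution.

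First, the Betti relation is $\be_{i,j}(L) = \be_{i,j-1}(\mfa) + D_{i,j}$: multiplication by $x$ shifts internal degree by one but does \emph{not} shift homological degree, so your formula $\be_{i,j}(L) = \be_{i-1,j-1}(\mfa) + \cdots$ is off. Second, and more seriously, the $J$-contribution $D_{i,j}$ is \emph{not} confined to ``homological degrees $2$ and $3$''; it appears in every column $0,1,2$ of $\be(L)$. What makes the argument work is a constraint on \emph{rows}, not columns. The key lemma (which you do not isolate) is $G_{\min}(J) \geq G_{\max}(\mfa)+1$: setting $k := G_{\min}(J)$, the $\mfa$-part of $\be(L)$ occupies rows $\leq k-1$ and the $J$-part occupies rows $\geq k$, with overlap only possible in the single row $k$. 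Hence every length-$3$ degree sequence $\mathbf{d}^s$ of $\mfa$ has $d_2^s \leq k+1$. When $d_2^s < k+1$ the three relevant entries of $\be(L)$ at $\bar{\mathbf{d}}^s$ are literally equal to those of $\be(\mfa)$ at $\mathbf{d}^s$, so the algorithms run identically. When $d_2^s = k+1$, only the column-$2$ entry differs (it is $\tilde\be_{2,d_2^s}(\mfa) + c_{1,k+1}$, larger); since for $s<t$ the column-$2$ entry is by definition not the one being eliminated, the minimum defining $\al_s$ is attained in column $0$ or $1$ and hence $\tilde\al_s = \al_s$, while at $s=t$ the enlarged column-$2$ ratio can only raise the minimum, giving $\tilde\al_t \geq \al_t$. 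Your sketch gestures at this mechanism but locates it in the wrong place; without the single-row-overlap lemma you cannot rule out the ``new corner between $\bar{\mathbf{d}}^i$ and $\bar{\mathbf{d}}^{i+1}$'' that you flag at the end.
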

The second main result in this article is devoted to the pure diagrams - that is to say, degree sequences - of the BS decomposition of the Betti diagrams of $L$ and $(J,x)=(L,x)$ in the polynomial ring $R = \mathbf{k}[x,y,z]$. Like in Theorem \ref{thm1}, we notice some similarities of the BS decompositions of lex ideal $L$ and $(L,x)$. We reveal that the entire part of the  BS decomposition of $(L,x)$ containing all pure diagrams of length less than $3$ shows up precisely 
at the end of the BS decomposition of $L$. That is, all pure diagrams of length less than $3$ are exactly the same with the coefficients. In particular, we prove that
\begin{theorem}\label{thm2} Let $L \subset R = \mathbf{k}[x,y,z]$ be an Artinian lex-segment ideal of codimension $3$.  
Suppose that $L$ cannot be decomposed as $ L = x(x,y,z^t) + J$ where $J$ is different from $(y,z)^{G_{min}(J)}$ and $1 < t < k-1$.

Let $\mfa = L : (x)$ be a lex-segment ideal of $R$. Then $L = x\mfa + J$ where $J \in \mathbf{k}[y,z]$ is a stable ideal of $\codim 2$.
The ideal $(J,x)=(L,x)$ is also a $\codim 3$ Artinian, lex-segment ideal in $R$.
\[\be(L,x) =  R_{(L,x)} + \sum\limits_{i=t+1}^{n} \al_i \pi_{{\bf d^i}}  \] 
where  ${\bf d}^{t+1} < {\bf d}^{t+2} < ... < {\bf d}^{n}$ are all top degree sequences  of length less than $3$,  
with the coefficients $\al_i$, \ $i = t+1,...,n$. $R_{(L,x)}$ is the linear combination of the pure diagrams associated with the degree sequences of length $3$. 

Then the BS decomposition of $L$ is
\[\be(L) =  R_{L} + \sum\limits_{i=t+1}^{n} \al_i \pi_{{\bf d^i}}  \]
where the chain ${\bf d^{t+1} < d^{t+2} < ... < d^{n}}$  of degree sequences of length $2$ and $1$ exactly with the same coefficients $\al_i$ and $R_L$ is the linear
combination of the pure diagrams associated with the degree sequences of length $3$.
\end{theorem}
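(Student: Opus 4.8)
The plan is to combine the Eliahou--Kervaire description of the Betti tables of $L$, $(L,x)$ and $\mfa$ with Theorem~\ref{thm1}, and then to pin down the length-$3$ part of the Boij--S\"oderberg decomposition of $\be(L)$ exactly. Set $J=L\cap\mathbf k[y,z]$, so that $L=x\mfa+J$ and $(L,x)=(J,x)$, with $J$ stable of codimension $2$. The hypothesis on $L$ guarantees that the minimal monomial generators split as a disjoint union $G(L)=G(J)\sqcup x\,G(\mfa)$ and that $L$ and $(L,x)$ are lex. Applying the Eliahou--Kervaire formula to all three ideals and using that $\max(xu)=\max(u)$ for every monomial $u$, the generator contributions match up termwise and give a single identity of Betti diagrams,
\[
\be(L)+\pi_{(1)}\;=\;\be(L,x)+\be(\mfa)(-1),
\]
which I will call $(\star)$; here $\pi_{(1)}$ is the pure diagram with only $\be_{0,1}=1$ (the contribution of the generator $x$) and $\be(\mfa)(-1)$ is $\be(\mfa)$ with internal degrees raised by one (the contribution of $x\,G(\mfa)$). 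This holds quite generally; the hypothesis on $L$ really does its work later.

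Now split the Boij--S\"oderberg decomposition of $(L,x)$ as $\be(L,x)=R_{(L,x)}+\rho_{(L,x)}$, where $R_{(L,x)}$ collects the length-$3$ pure diagrams and $\rho_{(L,x)}=\sum_{i=t+1}^{n}\al_i\pi_{{\bf d}^i}$ is the length-$<3$ part. By $(\star)$,
\[
\be(L)\;=\;\underbrace{\bigl(R_{(L,x)}+\be(\mfa)(-1)-\pi_{(1)}\bigr)}_{=:X}\;+\;\rho_{(L,x)} .
\]
It suffices to prove: (a) $X$ is a nonnegative rational combination of length-$3$ pure diagrams, and (b) the length-$3$ degree sequences occurring in $X$, followed by ${\bf d}^{t+1}<\dots<{\bf d}^{n}$, form a single chain. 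Indeed, once (a) and (b) hold, $\be(L)=X+\rho_{(L,x)}$ exhibits $\be(L)$ as a positive combination of pure diagrams along a chain, so by uniqueness it \emph{is} the Boij--S\"oderberg decomposition of $\be(L)$; hence $R_L=X$ is a combination of length-$3$ pure diagrams and the length-$<3$ part of $\be(L)$ equals $\rho_{(L,x)}=\sum_{i=t+1}^{n}\al_i\pi_{{\bf d}^i}$, which is exactly the assertion of the theorem.

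For (a), note first that $X$ has nonnegative entries: the single unit $\be_{0,1}(L,x)=1$ coming from the generator $x$ is entirely absorbed into $R_{(L,x)}$, so $R_{(L,x)}-\pi_{(1)}\ge 0$, and $\be(\mfa)(-1)\ge 0$; moreover $(\star)$ gives $\rho_{(L,x)}\le\be(L)$ entrywise, so $X=\be(L)-\rho_{(L,x)}\ge 0$. Granting that $X$ lies in the Boij--S\"oderberg cone, write its decomposition $X=\sum_k c_k\pi_{{\bf e}^k}$ with $c_k>0$ and consider the $K$-polynomial $P_D(t)=\sum_{i,j}(-1)^iD_{i,j}\,t^j$. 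Since $R/L$ and $R/(L,x)$ are Artinian, $P_{\be(L)}(1)=P_{\be(L,x)}(1)=1$ and $P_{\be(L)}'(1)=P_{\be(L,x)}'(1)=0$; every length-$3$ pure diagram $D$ satisfies $P_D(1)=P_D'(1)=0$ (the first two Herzog--K\"uhl relations), so the length-$<3$ tail $\rho_{(L,x)}$ has $P_{\rho_{(L,x)}}(1)=1$ and $P_{\rho_{(L,x)}}'(1)=0$. Hence $P_X(1)=0$ and $P_X'(1)=0$. But in $\sum_k c_k\pi_{{\bf e}^k}$ only length-$1$ diagrams contribute (positively) to $P(1)$, so $P_X(1)=0$ forces them to be absent; then only length-$2$ diagrams contribute to $P'(1)$, each strictly negatively, so $P_X'(1)=0$ forces those absent too. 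Therefore every ${\bf e}^k$ has length $3$, giving (a).

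What remains is to show that $X$ actually lies in the cone and that the chain condition (b) holds, and this is where the hypothesis on $L$ is used. The sequences ${\bf d}^{t+1}<\dots<{\bf d}^{n}$ are the length-$<3$ sequences of $\be(L,x)$, and by the Eliahou--Kervaire description they sit in a low, explicitly bounded range of internal degrees, governed by the generator $x$ and by the unique pure power of $y$ in $G(J)$; the degree sequences appearing in $X$, on the other hand, are controlled by $R_{(L,x)}$ --- whose top is transported verbatim from $\be(\mfa)$ by Theorem~\ref{thm1} --- and by $\reg(\mfa)$, i.e.\ by the length-$<3$ tail $R_\mfa$ of the decomposition of $\mfa$. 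The standing assumption that $L$ does not decompose as $x(x,y,z^{t})+J'$ with $J'\neq(y,z)^{\indeg J'}$ and $1<t<k-1$ is precisely the condition that rules out the configurations in which $R_\mfa$ (equivalently, the anomalous syzygetic behaviour of $\mfa$) produces strands reaching far enough down to leave the cone or to violate the chain; in every other case a direct comparison of these degree bounds establishes both points. Carrying out that comparison --- controlling the vertical extent of $\be(\mfa)(-1)$ against the length-$<3$ strands inherited from $(L,x)$ --- is the main obstacle, and the natural way to organize it is an induction on the number of minimal generators of $L$, with $\mfa=L:(x)$ as the smaller ideal, using that $(\star)$ is preserved each time a leading pure diagram is peeled off so that Theorem~\ref{thm1} can be reapplied at every stage.
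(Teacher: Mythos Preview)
Your identity $(\star)$, namely $\be(L)+\pi_{(1)}=\be(L,x)+\be(\mfa)(-1)$, is correct and useful, and the $K$-polynomial argument in part~(a) is valid \emph{once one knows} that $X=R_{(L,x)}+\be(\mfa)(-1)-\pi_{(1)}$ lies in the Boij--S\"oderberg cone.  But that last point is exactly where the content of the theorem sits, and your proposal does not prove it.

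Two specific gaps.  First, the assertion that ``the single unit $\be_{0,1}(L,x)=1$ is entirely absorbed into $R_{(L,x)}$'' is unjustified: it is equivalent to saying that no length-$<3$ degree sequence in the decomposition of $(L,x)$ has $d_0=1$, which is precisely one of the facts the paper extracts from its explicit run of the greedy algorithm on $\be(L,x)$.  Without it you do not even get $X\ge 0$ from your argument, and in any case entrywise nonnegativity is far from membership in the cone.  Second, the last paragraph is a plan, not a proof.  You acknowledge that showing $X$ is in the cone and that the two chains concatenate is ``the main obstacle'', and you propose an induction on $|G(L)|$ via $\mfa=L:(x)$, but you neither state an inductive hypothesis that is actually preserved under peeling off a pure diagram, nor explain how Theorem~\ref{thm1} (which controls only the \emph{beginning} of the chain) would let you propagate the cone and chain conditions through the middle of the decomposition.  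The excluded family $L=x(x,y,z^t)+J$ with $J\ne(y,z)^{G_{\min}(J)}$ is exactly where such a naive comparison fails, so invoking ``the hypothesis on $L$'' without saying how it enters is not enough.

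For comparison, the paper's proof does not try to place $X$ in the cone abstractly.  It runs the greedy algorithm on $\be(L,x)$ and on $\be(L)$ in parallel, through several layers of sub-cases governed by inequalities among the $c_{0,j}$, and checks in each branch that the two algorithms reach literally the same remaining diagram before any length-$<3$ sequence appears; the induction used is on $G_{\min}(J)-G_{\min}(\mfa)$, with base case $\mfa=(x,y,z)^{k-1}$.  Your reformulation via $(\star)$ and the $K$-polynomial test is more conceptual and would be a genuine improvement if the cone/chain step could be completed, but as written the proposal stops short of a proof at the decisive point.
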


As a plan of this paper, we first discuss some useful relations of the Betti numbers of the ideals $L$, $\mfa$ and $J$ in section \ref{results}. We also describe the entire Betti diagram of the lex ideal $L$ in terms of the Betti numbers of the 
colon ideal $\mfa = L : (x)$ and the stable ideal $J$ in the same section.  In section \ref{thm1section}, we give the proof of Theorem \ref{thm1} which gives the relation between the beginning of the BS decompositions of $L$ and $\mfa$. Proof of Theorem \ref{thm2} is given in section \ref{thm2newsection}.


Combining the results of Theorems \ref{thm1} and \ref{thm2}, in the case of $R = \mathbf{k}[x,y,z]$, 
 we give the following diagram to summarize nicely the relation between the degree sequences in the BS decompositions of $L$ and the ideals $\mfa$ and $(L,x)$.
\[
\begin{matrix}\text{The chain of  }\\ \text{ degree sequences } \\ \text{ of $L$} \end{matrix} = \left[ \begin{matrix}\text{all length $3$} \\ \text{ degree sequences  }\\ \text{ coming from } \\ \mfa(-1) \end{matrix} \right]  
< \left[ \begin{matrix}\text{more length $3$}\\ \text{degree sequences} \\ \text{ (section \ref{observation})} \end{matrix}\right] < 
\left[ \begin{matrix} \text{all length $<3$  } \\ \text{ degree sequences } \\ \text{ coming from } \\ (L,x) \end{matrix} \right].
\]
We will see that, most of the time, the decompositions of $\mfa$ and $(L,x)$  may not enough to cover all pure diagrams in the decomposition of $L$ since there might be some pure diagrams of length $3$ which may not obtained by the ideal $\mfa$.


One naturally hopes to obtain a description of the entire BS decomposition of lex-segment ideal $L$ in terms of some other related ideals. Section \ref{observation} includes further observations for a possible way to describe the entire BS chain of degree sequences in the BS decomposition of $L$. 
The lexicographic order $x>_{lex}y>_{lex}z$ makes us think about the colon ideals $\mfb = L : (y)$ and $\mfc = L : (z)$. As in the case $\mfa = L : (x)$ in section \ref{thm1section}, one may expect similar results for the lex ideals $\mfb$ and $\mfc$. Indeed, the examples show that there is a relation between the BS decompositions of the lex ideal $L$ and the colon ideals $\mfb$ and $\mfc$. This allows us to give an almost full description of the pure diagrams appearing in the decomposition of $L$.

\section{Background and Preliminaries}
\label{results}
Throughout this section we assume that $R$ is a graded polynomial ring with $n < \infty$ variables over a field $\mathbf{k}$ with each variable has degree one. 
In the case of $n=3$, we will see the description of the Betti diagram $L = x\mfa + J$ in terms of the Betti numbers of $\mfa$ and $J$.

Let $M$ be a graded $R$-module. The minimal graded free resolution of $M$ is written as 
\[
\mathbb{F}:\, \, 0 \to F_n \rightarrow \ldots \rightarrow F_i \rightarrow \ldots \rightarrow F_1 \to F_0 \rightarrow M \rightarrow 0 
\]
where
\[
F_i = \bigoplus\limits_{\alpha\geq 0} R(-\alpha)^{\be_{i,\alpha}}.
\]
The numbers $\be_{i,\alpha}$ are the Betti numbers of $M$ and are considered in the Betti diagram 
$\be(M)$ of $M$ whose entry in row $i$ and column $j$ is $\be_{i,i+j}$. Let 
$
{\bf d} = (d_0, d_1,..., d_{n-1}) \in \mathbb{Z}_{\geq 0 }^{n}
$ 
be a sequence of non-negative integers of length $n+1$ with $d_0<...<d_{n-1}$. 
The graded free resolution of $M$ is called a \textit{pure
resolution of type ${\bf d} = (d_0,...,d_i,...,d_{n-1})$ } if, for all $i = 0,1,...,n-1$, the $i$-th syzygy module of $M$ is generated only by elements of degree 
$d_i$, in other words, all Betti numbers are zero
except $\be_{i,d_i}(M)$. Then the Betti diagram of this module is called a \textit{pure diagram of type ${\bf d}$}. The formula for 
the pure diagram associated by ${\bf d}$ is based on the Herzog-K\"uhl equations introduced in \cite{HK},
\[ \be_{i,\alpha}  = 
\begin{cases}  \lambda\prod\limits_{i = 0, i \neq k}^{n-1} \displaystyle\frac{1}{|d_i-d_k|}  & \text{if} \, \alpha = d_i\\
                0                                                                & \text{otherwise}
\end{cases}
\text{\, \, where $\lambda \in \mathbb{Q}_{> 0}$.}
\]
%
We define a partial order on the degree sequences so that ${\bf d}^s < {\bf d}^t$ if $d_i^s \leq d_i^t$ for all $i = 0,1,...,n-1$. 
The order on the degree sequences induces an order of the pure diagrams $\pi_{{\bf d}^s} < \pi_{{\bf d}^t}$ if ${\bf d}^s < {\bf d}^t$.  The BS decomposition of a Betti diagram of an $R$- module is a linear combination of pure diagrams with positive coefficients. 

\begin{algorithm}[{\bf Boij-S\"oderberg Decomposition Algorithm}] \label{BSalgorithm}The algorithm to decompose for a given (non-pure) Betti diagram has the following steps
\begin{enumerate}
\item[(1)] Determine the top degree sequence  ${\bf d}= (d_0,...,d_i,...,d_{n-1})$ of the Betti diagram of the $R$-module $M$, say $\be(M)$.
\item[(2)] Determine the coefficient $\alpha$ of the pure diagram 
$\pi_{{\bf d}}$ by 
\[
\min\left\{\displaystyle\frac{\be_{i,d_i}(M)}{\be_{i,d_i}(\pi_{\bf d})}, \text{ for } i = 0,1,..,n-1\right\}.
\]
\item[(3)] Subtract $\alpha\pi_{{}\bf d}$ from the Betti diagram $\be(M)$ so that the new entries will be all positive.
\item[(4)] Repeat the first and second steps for the remaining diagram $\be(M)- \alpha \pi_{{\bf d}}$ until the Betti diagram is completely decomposed into pure diagrams.
\end{enumerate}
\end{algorithm}
Thus the BS decomposition of a graded $R$-module $M$ gives an ordered decomposition of the Betti diagram,
\[ \be(M) = \sum_{s} a_s \pi_{{\bf d}^s} \text{ where }  \pi_{{\bf d}^s} < \pi_{{\bf d}^t}  \text{ if } s<t. \]
\begin{example}
For instance, let $I = (x^2, xy, xz, y^2)$ be an ideal in $\mathbf{k}[x,y,z]$, the BS decomposition of $R/I$ is given as 
\[
\be(R/I) = (8) \pi_{{\bf d^0}} + (4) \pi_{{\bf d^1}}
\]
where
 \[
 \pi_{{\bf d^0}} =\begin{tabular}{c|cccc}
 $ $ & 0 & 1 & 2 & 3\\
 \hline
 $0$ & $\frac{1}{24}$ & - & - & - \\
 $1$ & - & $\frac{1}{4}$ & $\frac{1}{3}$ & $\frac{1}{8}$\\
 \end{tabular} <
 \pi_{{\bf d^1}}  = \begin{tabular}{c|ccc}
 $ $ & 0 & 1 & 2 \\
 \hline
 $0$ & $\frac{1}{6}$ & - & - \\
 $1$ & - & $\frac{1}{2}$ & $\frac{1}{3}$ \\
 \end{tabular}
 \text{ as }  {\bf d^0} = (0,2,3,4) < {\bf d^1} = (0,2,3).
 \]
 \end{example}
Consider a monomial ideal $I$ in $R$. We will denote the set of minimal monomial generators of $I$ by $G(I)$. Then $G(I)_i $ will denote the subset of $G(I)$ containing the minimal generators of degree $i$. 
The notation $G_{min}(I)$ will be used for the initial degree of the monomials in $I$ and $G_{max}(I)$ will stand for the maximum degree of the monomials in $G(I)$
throughout the paper. We next state the definitions of graded lexicographic monomial order and lex-segment ideal.
\begin{definition}
Let $\mathfrak{m} = x_1^{s_1}...x_n^{s_n}$ and $\mathfrak{n} = x_1^{t_1}...x_n^{t_n}$ be two monomials in $R=\mathbf{k}[x_1,...,x_n]$. 
If either $\deg \mathfrak{m} > \deg \mathfrak{n}$ or $\deg \mathfrak{m} = \deg \mathfrak{n}$
 and $s_i-t_i > 0$ for the first index $i$ such that $s_i-t_t \neq 0$, then it is said that $\mathfrak{m} >_{glex} \mathfrak{n}$ in \textit{graded lexicographic order}.
\end{definition}
\begin{definition}
Let $R$ be a polynomial ring and $L$ be a monomial ideal in $R$ generated by the monomials $m_1,...,m_l$. The ideal $L$
is called a \textit{lex-segment ideal} (lexicographic ideal, or lex ideal) in $R$ if for each monomial $m \in R$ the existence of some $m_i \in G(L)$
with \ $m >_{glex} m_i$ and $\deg(m) = \deg(m_i)$ implies $m\in L$. 
\end{definition}
For simplicity, we will use $>$ for the lex order $>_{glex}$ unless the order is different than lexicographic order. In this section, we make some observations about the Betti diagrams of lex-segment ideals. We aim to get some correlations between Betti numbers of the lex ideals $L = x\mfa +J $, $\mfa = L:x$ and $J$ in $\mathbf{k}[x,y,z]$. Next lemma shows that the colon ideal $\mfa$ is also a lex-segment ideal in $\mathbf{k}[x,y,z]$.
\begin{lemma} Let $L$ be a lex-segment ideal in $R = \mathbf{k}[x_1,..,x_n]$.
Consider the colon ideals $\mfa_i = L : (x_i)$, for $i=1,...,n$. Then each $\mfa_i$ is also a lex-segment ideal in $R$.  
\end{lemma}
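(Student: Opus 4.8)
The plan is to reduce to the standard ``degreewise'' reformulation of the lex condition and then exploit that the graded lexicographic order is a monomial order. First I would record two elementary facts. (a) For a monomial ideal $L$, the colon ideal $L:(x_i)$ is again a monomial ideal, and a monomial $m$ lies in it exactly when $x_i m \in L$; in particular the set of degree-$d$ monomials of $L:(x_i)$ is $\{\, m : \deg m = d,\ x_i m \in L \,\}$. (b) A monomial ideal $I \subseteq R$ is a lex-segment ideal if and only if for every $d$ its set of degree-$d$ monomials is an initial segment under $>$, i.e.\ $u \in I$, $\deg u = \deg v$ and $v > u$ force $v \in I$. The reverse implication of (b) is immediate from the definition. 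For the forward one I would argue as follows: if $u \in I$ has degree $d$ and is divisible by a generator $g \in G(I)$ of degree $e \le d$, and $v > u$ with $\deg v = d$, let $g'$ be the lex-largest degree-$e$ monomial dividing $v$, obtained by reading off the exponents of $v$ greedily from $x_1$; a short comparison of this greedy truncation for $u$ and for $v$ at the first variable where they disagree shows $g' \ge g$, and since $\deg g' = \deg g$ the generator-based definition gives $g' \in I$, whence $v \in I$.

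Granting (b), the lemma itself is a one-liner. Fix $i$, set $\mfa_i = L:(x_i)$, and let $m, m'$ be monomials of the same degree $d$ with $m \in \mfa_i$ and $m' > m$. Then $x_i m \in L$ by (a), and since $>$ is a monomial order it is compatible with multiplication, so $x_i m' > x_i m$ while $\deg(x_i m') = \deg(x_i m) = d+1$. Applying (b) to the lex-segment ideal $L$ gives $x_i m' \in L$, i.e.\ $m' \in \mfa_i$. Hence every graded piece of $\mfa_i$ is an initial segment, so by (b) again $\mfa_i$ is a lex-segment ideal, for every $i = 1, \dots, n$.

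The only genuinely non-formal ingredient is the forward direction of (b) --- producing, from a generator dividing $u$, a divisor of $v$ of the same degree that is still $\ge$ that generator. I expect the bookkeeping in this truncation argument to be the main (though routine) obstacle; everything afterwards follows at once from the multiplicativity of the graded lexicographic order. An alternative that avoids (b) would be to work with the explicit generating set $\{\, m/\gcd(m,x_i) : m \in G(L) \,\}$ of $L:(x_i)$ and run the same multiplicativity argument on generators, but then one also has to track which of these monomials are minimal generators, which makes that route less clean than the degreewise approach.
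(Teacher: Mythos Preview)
Your proposal is correct and the core argument is exactly the paper's: take $m'\in\mfa_i$, take $m>m'$ of the same degree, multiply both by $x_i$, and use that $L$ is lex to get $x_im\in L$, hence $m\in\mfa_i$. The paper does this in four sentences and silently uses the degreewise characterization (your fact (b)) when it writes ``This implies $x_im\in L$''; you make that step explicit with the greedy-truncation argument, which is a legitimate addition given that the paper's \emph{definition} of lex-segment is phrased in terms of minimal generators rather than graded pieces.
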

\begin{proof}
 Let $m' \in \mfa_i$ be a monomial, for any $i=1,...,n$. Let $m$ be a monomial in $R$ and $\deg m = \deg m'$ and $ m >_{glex} m'$.
Then $x_im' \in L$ as $ \mfa_i = L : (x_i)$, and  $x_im >_{glex} x_im'$. This implies $x_im \in L$ and hence $m \in L : (x_i) = \mfa_i$.
\end{proof}
Let $u$ be a monomial in $R = \mathbf{k}[x_1,..,x_n]$, we define $m(u)$ to be the largest index $i$ such that $x_i$ divides $u$.
Recall that a monomial ideal $I$ is said to be \textit{stable} if, for every monomial $u \in G(I)$ and all $i < m(u)$,  $x_iu/x_{m(u)}$ is also in $G(I)$.

Next we quote a proposition from \cite{EK}.
\begin{proposition}\textbf{(Eliahou-Kervaire formula)}
 Let $I\subset R$ be a stable ideal. Then
\begin{enumerate}
 \item[(a)]$\be_{i,i+j}(I) = \sum\limits_{u\in G(I)_j}\binom {m(u)-1} {i}$,
 \item[(b)]$\projdim{R/I} = \max\{m(u):u\in G(I)\}$,
 \item[(c)]$\reg{(I)} = \max\{\deg{(u)}:u\in G(I)\}$.
\end{enumerate}

\end{proposition}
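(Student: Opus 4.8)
The statement is the classical Eliahou--Kervaire theorem, so the plan is to reproduce the standard argument: construct an explicit minimal graded free resolution of a stable ideal whose ranks are visibly the binomial sums in part (a), and then read off (b) and (c). The starting point is the \emph{canonical decomposition} of monomials in a stable ideal: every monomial $w\in I$ can be written uniquely as $w=b(w)\,e(w)$ with $b(w)\in G(I)$ and $\min(\supp e(w))\geq m(b(w))$. One proves this by taking $b(w)$ to be the generator obtained by successively dividing off the largest-index variable of $w$ until one first lands in $G(I)$; stability guarantees that each intermediate monomial still lies in $I$, and a short argument with the stability condition gives the support inequality and uniqueness.

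I would then build the resolution by the iterated mapping cone method. Order $G(I)=\{u_1,\dots,u_r\}$ by the Eliahou--Kervaire order (smaller degree first, lexicographically larger first within a degree) and set $I_k=(u_1,\dots,u_k)$. The key lemma is the colon computation
\[ (u_1,\dots,u_{k-1}):(u_k)\;=\;(x_1,\dots,x_{m(u_k)-1}). \]
The inclusion $\supseteq$ is immediate from stability: for $i<m(u_k)$ the monomial $x_iu_k/x_{m(u_k)}$ lies in $I$, has degree $\deg u_k$, and is not divisible by $u_k$, hence is divisible by an earlier generator, so $x_iu_k\in I_{k-1}$. The reverse inclusion $\subseteq$ is where the chosen order and the canonical decomposition are used: if $v\,u_k\in I_{k-1}$ then $b(v u_k)\neq u_k$ precedes $u_k$, and tracking the decomposition forces $v$ into $(x_1,\dots,x_{m(u_k)-1})$. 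Granting the lemma, the short exact sequences $0\to (R/(I_{k-1}:u_k))(-\deg u_k)\to R/I_{k-1}\to R/I_k\to 0$ let one form, inductively, a free resolution of $R/I_k$ as the mapping cone of a comparison map from the Koszul complex on $x_1,\dots,x_{m(u_k)-1}$, shifted by $\deg u_k$, into the resolution of $R/I_{k-1}$ already constructed (alternatively, one can simply write down the Eliahou--Kervaire complex with basis symbols $(u;\sigma)$, $u\in G(I)$, $\sigma\subseteq\{1,\dots,m(u)-1\}$, in degree $\deg u+|\sigma|$, and check directly that the explicit differential built from $b(\cdot),e(\cdot)$ gives a complex; the mapping cone description is just a cleaner packaging of this).

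Two points remain, and the second is the main obstacle. First, bookkeeping: the Koszul complex on $m(u_k)-1$ variables contributes $\binom{m(u_k)-1}{i}$ free generators in homological degree $i$ and internal degree $\deg u_k+i$; after the degree shift and the homological shift of the mapping cone, summing over all $k$ yields $\beta_{i+1,\,i+j}(R/I)=\sum_{u\in G(I)_j}\binom{m(u)-1}{i}$, which is exactly (a) for $I$. Second, minimality of the iterated mapping cone: this reduces to checking that each comparison map has all entries in $\mathfrak{m}$. This is the real work, and it is precisely where the canonical decomposition pays off — one writes the comparison map explicitly on Koszul basis elements, the coefficients being the monomial factors $e(\cdot)$ produced by the decomposition, and verifies that each such factor is a non-unit (equivalently, the internal degree $\deg u_k+i$ at the source strictly exceeds the internal degrees of the target generators that could receive a unit, forcing the map into $\mathfrak{m}$). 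Once minimality is established the resolution just constructed is \emph{the} minimal free resolution, so (a) holds exactly; part (b) follows since the top nonvanishing homological degree of the resolution of $R/I$ is $1+\max_k(m(u_k)-1)=\max\{m(u):u\in G(I)\}$; and part (c) follows since $\beta_{i,i+j}(I)\neq 0$ exactly when $G(I)_j\neq\emptyset$ (and $0\leq i\leq m(u)-1$ for such a $u$), so $\reg(I)=\max\{j:G(I)_j\neq\emptyset\}=\max\{\deg u:u\in G(I)\}$.
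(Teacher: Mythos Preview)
The paper does not prove this proposition at all: it is simply quoted from \cite{EK} as a known result (``Next we quote a proposition from \cite{EK}''). Your proposal, by contrast, sketches the original Eliahou--Kervaire argument --- canonical decomposition, the colon computation $(u_1,\dots,u_{k-1}):(u_k)=(x_1,\dots,x_{m(u_k)-1})$, iterated mapping cones from Koszul complexes, and the minimality check --- and the outline is correct. So there is nothing to compare on the paper's side; what you have written is essentially the proof the citation points to.
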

 
From now on, we always assume $n=3$, that is, $R = \mathbf{k}[x,y,z]$ unless otherwise is stated. We follow with a lemma that indicates the relation between the minimal generators of ideals $L = x\mfa +J $, $\mfa = L:x$ and $J$. Then the next lemma provide a crucial short exact sequence of these ideals.

\begin{lemma}\label{decomp-of-L}
 If $L$ is lex-segment ideal in $R$, then there are unique monomial ideals $\mfa \subset R$ and $J \subset \mathbf{k}[y,z]$ such that $L = x\mfa + J$. Moreover, the ideal $\mfa$ is also a lex-segment ideal since $\mfa = L : (x)$ and $J$ is stable in $R$, and
$G(L) = xG(\mfa) \cup G(J).$ 
\end{lemma}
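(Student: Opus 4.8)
The plan is to build the decomposition directly from the minimal generators of $L$ and then use the lex-segment property twice: once to identify $\mfa$ with the colon ideal $L:(x)$, and once to see that the $\{y,z\}$-part of $L$ is forced. Throughout we may assume $L \neq R$, so that all minimal generators of $L$ have positive degree. First I would set $A = \{m/x : m \in G(L),\ x \mid m\}$ and $B = \{m \in G(L) : x \nmid m\}$; then $B \subseteq \mathbf{k}[y,z]$ and $G(L) = \{xu : u \in A\} \sqcup B$. Put $\mfa := (A) \subseteq R$ and $J := (B) \subseteq \mathbf{k}[y,z]$, so that $L = x\mfa + J$ is immediate. Since $G(L)$ is an antichain under divisibility, so are $A$ and $B$ --- if $u \mid u'$ with $u, u' \in A$ then $xu \mid xu'$ in $G(L)$, forcing $u = u'$ --- hence $G(\mfa) = A$, $G(J) = B$, and $G(L) = xG(\mfa) \sqcup G(J)$. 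I would also record the identity $J = L \cap \mathbf{k}[y,z]$: a monomial of $L$ with no factor $x$ is divisible by some $g \in G(L)$, and such a $g$ then lies in $B$.

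Next I would show $\mfa = L:(x)$; the inclusion $\mfa \subseteq L:(x)$ is clear from $x\mfa \subseteq L$, and once the reverse inclusion is in hand, the lemma proved above (that the colon of a lex-segment ideal by a variable is again lex-segment) shows $\mfa$ is lex-segment. For $L:(x) \subseteq \mfa$ it suffices, both sides being monomial ideals, to take a monomial $m$ with $xm \in L$ and show $m \in \mfa$. Choose $g \in G(L)$ dividing $xm$: if $x \mid g$ then $g = xu$ with $u \in A$ and $u \mid m$, done; if $x \nmid g$ then $g \in B$ and $g \mid m$, and it remains to prove $B \subseteq \mfa$. Here is where the hypothesis enters: given $g = y^{a}z^{b} \in B$ (with $\deg g \geq 1$), pick a variable $\xi \in \{y,z\}$ dividing $g$ and set $w := xg/\xi$. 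Then $w$ has the same degree as $g$ and is lex-larger than $g$, so $w \in L$ because $L$ is a lex-segment ideal and $g \in L$. A generator $h \in G(L)$ dividing $w$ cannot be free of $x$ --- otherwise $h \mid w/x = g/\xi$, a proper divisor of $g$, contradicting the minimality of $g$ in $G(L)$ --- so $h = xu$ with $u \in A$ and $u \mid g/\xi \mid g$, giving $g \in \mfa$. This yields $B \subseteq \mfa$, hence $L:(x) \subseteq \mfa$, and therefore $\mfa = L:(x)$.

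Finally I would treat stability of $J$ and uniqueness. Using $J = L \cap \mathbf{k}[y,z]$ and the fact that the lex order restricted to $\{y,z\}$-monomials is the ambient one, $J$ is a lex-segment ideal of $\mathbf{k}[y,z]$, hence stable; explicitly, for $v = y^{a}z^{b} \in G(J)$ with $b \geq 1$ the monomial $yv/z$ is lex-larger than $v$ of equal degree, so $yv/z \in L \cap \mathbf{k}[y,z] = J$, which is exactly the stability condition in $\mathbf{k}[y,z]$. For uniqueness, suppose $L = x\mfa' + J'$ with $\mfa' \subseteq R$ and $J' \subseteq \mathbf{k}[y,z]$ monomial ideals. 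The $\{y,z\}$-monomials in $L$ cannot be divisible by a generator of $x\mfa'$, so they are precisely the monomials of $J'$; thus $J' = L \cap \mathbf{k}[y,z] = J$. Then for each $u \in A$ the generator $xu \in L = x\mfa' + J$ must be divisible by a generator of $x\mfa'$, since a generator of $J$ dividing $xu$ would be a proper divisor of the minimal generator $xu$ of $L$; hence $u \in \mfa'$, so $\mfa \subseteq \mfa'$, and since $x\mfa' \subseteq L$ gives $\mfa' \subseteq L:(x) = \mfa$, we conclude $\mfa' = \mfa$. I expect the inclusion $B \subseteq \mfa$ --- equivalently $J \subseteq \mfa$, equivalently $L:(x) = \mfa$ rather than the a priori larger ideal $\mfa + J$ --- to be the main obstacle: it is exactly the place where the lex-segment hypothesis is indispensable, and for a general monomial ideal neither this identity nor the uniqueness of the decomposition holds.
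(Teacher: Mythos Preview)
Your proof is correct and considerably more detailed than the paper's own argument, which simply asserts that ``the proof follows immediately from the fact that $L$ is a lex-segment ideal with graded lex order $x>y>z$.'' Your careful verification that $B \subseteq \mfa$ (hence $\mfa = L:(x)$ rather than merely $\mfa + J = L:(x)$), your antichain argument for $G(\mfa)=A$ and $G(J)=B$, and your uniqueness argument all fill in exactly what the paper leaves to the reader. One small remark: the paper's phrasing ``$J$ is stable in $R$'' is imprecise---as your argument shows, $J$ is lex-segment (hence stable) in $\mathbf{k}[y,z]$, which is what is actually used later; the extension $JR$ is generally \emph{not} stable in $R$ (e.g.\ $J=(y^2,yz,z^2)$ fails, since $xz \notin JR$), so your formulation is the correct one.
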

\begin{proof}
The proof follows immediately from the fact that $L$ is a lex-segment ideal with graded lex order $x>y>z$.
\end{proof}
\begin{lemma}\label{lemma for ses} Let  
$0 \rightarrow F_1 \rightarrow F_0\rightarrow J$  and $0 \rightarrow G_2 \rightarrow G_1 \rightarrow G_0 \rightarrow \mfa$ 
be graded free resolutions for the ideals $J$  and $\mfa$. If $L = x\mfa + J$, then there is a short exact sequence $0 \rightarrow J(-1) \rightarrow \mfa(-1)\oplus J \rightarrow L \rightarrow 0$. 
Moreover, 
$$
0 \rightarrow G_2(-1) \oplus F_1(-1) \rightarrow G_1(-1) \oplus F_1 \oplus F_0(-1) \rightarrow  G_0(-1) \oplus F_0 \rightarrow L
$$
is the graded minimal free resolution of $L$.
\end{lemma}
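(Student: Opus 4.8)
The plan is to exhibit the short exact sequence directly, resolve its two outer terms, and then show that the mapping cone of a comparison map between these resolutions is minimal; the minimality is the heart of the argument.

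For the short exact sequence, define (writing $J$ also for its extension $JR\subset R$) the maps $\psi\colon\mfa(-1)\oplus J\to L$, $\psi(a,b)=xa+b$, and $\phi\colon J(-1)\to\mfa(-1)\oplus J$, $\phi(b)=(b,-xb)$. Surjectivity of $\psi$ is precisely the relation $L=x\mfa+J$. The map $\phi$ is well defined because $J\subseteq L$ forces $xJ\subseteq L$, hence $J\subseteq L:(x)=\mfa$; it is injective and $\psi\phi=0$. If $(a,b)\in\Ker\psi$ then $b=-xa\in J$, and for a monomial $a$ one has $xa\in J$ if and only if $a\in J$, since multiplication by $x$ does not affect divisibility by a monomial of $\mathbf{k}[y,z]$; thus $J:(x)=J$, so $a\in J$ and $(a,b)=\phi(a)$. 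Hence $\Image\phi=\Ker\psi$ and $0\to J(-1)\to\mfa(-1)\oplus J\to L\to 0$ is exact.

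Now $F_\bullet(-1)$ is a minimal free resolution of $J(-1)$ and $G_\bullet(-1)\oplus F_\bullet$ a minimal free resolution of $\mfa(-1)\oplus J$ (a shift of a minimal free resolution, and a direct sum of minimal free resolutions, is again minimal). Lifting $\phi$ to a chain map $\Phi\colon F_\bullet(-1)\to G_\bullet(-1)\oplus F_\bullet$ and forming the mapping cone yields a free resolution of $L$ whose $i$-th term is $(G_i(-1)\oplus F_i)\oplus F_{i-1}(-1)$; since the given resolution of $J$ has length at most $1$, this is precisely the complex in the statement. Minimality of this resolution is equivalent to the comparison map $\Phi$ having all of its entries in $\mathfrak m$. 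Writing $\Phi=(\Phi^G,\Phi^F)$ with $\Phi^G\colon F_\bullet(-1)\to G_\bullet(-1)$ and $\Phi^F\colon F_\bullet(-1)\to F_\bullet$: since $\Phi^F$ lifts multiplication by $-x$ on $J(-1)$, it may be chosen as $-x$ times the evident identifications, so all of its entries lie in $\mathfrak m$.

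The remaining, and main, point is that $\Phi^G$ --- which lifts the inclusion $J(-1)\hookrightarrow\mfa(-1)$ --- has no unit entries. I would argue this by internal degree. If $m\in G(J)$ then $m$ is a monomial of $\mathbf{k}[y,z]$ that is a minimal generator of the lex ideal $L$; every monomial of degree $\deg m$ that is divisible by $x$ is $>_{lex}m$, so all of them lie in $L$, whence $xR_{\deg m-1}\subseteq L$, that is $\mathfrak m^{\deg m-1}\subseteq L:(x)=\mfa$, and therefore $G_{max}(\mfa)\le\deg m-1$. Letting $m$ run over $G(J)$ gives $G_{max}(\mfa)<G_{min}(J)$. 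The Eliahou--Kervaire formula for the stable (indeed lex) ideal $\mfa$ shows that every graded shift occurring in $G_i$ is at most $i+G_{max}(\mfa)$, whereas in the minimal resolution of $J$ every graded shift occurring in $F_i$ is at least $i+G_{min}(J)>i+G_{max}(\mfa)$; hence no component of $\Phi^G$ can be an isomorphism of free summands, so $\Phi^G$ has no unit entries and the mapping cone is minimal. (Alternatively, minimality can be checked by computing $\be_{i,i+j}(L)$ directly from Eliahou--Kervaire, using $m(xv)=m(v)$ for $v\in G(\mfa)$ together with Pascal's identity for $u\in G(J)$, and observing that the outcome already equals the rank of the $i$-th term of the complex above.) I expect this minimality step --- establishing $G_{max}(\mfa)<G_{min}(J)$ and turning it into the vanishing of $\Phi$ modulo $\mathfrak m$ --- to be the genuine obstacle; the short exact sequence and the mapping cone are routine once it is available.
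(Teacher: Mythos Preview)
Your argument is correct and follows the same route as the paper: construct the short exact sequence, form the mapping cone, and argue minimality. The paper is terser throughout---it simply asserts the exact sequence and the mapping cone, and for minimality it shows only that $G(\mfa)\cap G(J)=\emptyset$ (via essentially the same lex-order trick you use: if $m\in G(J)$ then $m=ym'$ or $zm'$, and $xm'\in L$ forces $m'\in\mfa$, so $m$ is not minimal in $\mfa$), declaring that this rules out cancellation.

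Your minimality argument is in fact sharper: you establish the degree separation $G_{max}(\mfa)<G_{min}(J)$ (which the paper proves separately as Lemma~\ref{degree k}) and combine it with Eliahou--Kervaire to force every component of $\Phi^G$ into $\mathfrak m$. This handles $\Phi_1^G\colon F_1(-1)\to G_1(-1)$ as well as $\Phi_0^G$, whereas the paper's ``no common generators'' literally only controls $\Phi_0^G$ and tacitly relies on the later degree lemma for the syzygy level. So your proof is self-contained where the paper's is not, at the modest cost of anticipating Lemma~\ref{degree k}.
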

\begin{proof} The form of the lex-segment ideal $L$ implies the short exact sequence $0 \rightarrow J(-1) \rightarrow \mfa(-1)\oplus J \rightarrow L \rightarrow 0$.
The mapping cone for the short exact sequence provides a free resolution for $L$. 
Let $m\in G(\mfa)\cap G(J)$. Then $m\in G(J)$ implies either $m=ym'$ or $m=zm'$ for some monomial $m' \in \mathbf{k}[y,z]$. As $L$ is a lex-segment
ideal and $m\in L$, $xm' \in G(L)$. So $m' \in G(\mfa)$. Therefore $m$ is divisible by $m'$ and $m$ cannot be a minimal generator of $\mfa$. 
Therefore the ideals $J$ and $\mfa$ do not have common minimal generators. 
This tells us that there is no cancellation in the mapping cone structure. So the resulting graded free resolution for $L$ is minimal.
\end{proof}
We now analyze the Betti numbers of the ideals $L$, $\mfa = L : (x)$ and $J$. We know that the lex-segment ideals $L$ and $\mfa$ are stable and in addition to this,
$J$ is a lex ideal in $\mathbf{k}[y,z]$. Thus,  Eliahou-Kervaire formula gives rise to the following decomposition,
\begin{align*} 
\large \begin{matrix}
 \be_{i,i+j}(L)  =  \sum\limits_{u\in G(L)_j}\binom {m(u)-1} {i} 
                & =&  \underbrace{\sum\limits_{\substack{u\in G(L)_j\\ x|u}}\binom {m(u)-1} {i} }& + &\underbrace{\sum\limits_{\substack{u\in G(L)_j \\ x\nmid u }}\binom {m(u)-1} {i}}\\
                                                               &   & = \be_{i,i+j-1}(\mfa)  &    & \mbox{say} \ D_{i,i+j}
\end{matrix}
\end{align*}                                                             
We name the initial degree of $J$, $G_{min}(J) := k$ and the Betti numbers of $\be(\mfa)$ and $\be(J)$ as 
$$a_{i,i+j}:=\be_{i,i+j}(\mfa),\, \, \mbox{and} \, \,   c_{i,i+j}:=\be_{i,i+j}(J).$$
The following lemmas provide some relations and identities about the Betti numbers of $L$, $\mfa$. They help us to describe the entire
Betti diagram of $L$ with respect to the Betti numbers of $\mfa$ and $J$.
\begin{lemma}\label{initials}
As $L = x\mfa + J$ in $R = \mathbf{k}[x,y,z]$,
if $G_{min}(L) \geq 2$, then $G_{min}(L) = G_{min}(\mfa) + 1$ by stability of the ideals $L$ and $\mfa= L : (x) \neq 1$.\\
\end{lemma}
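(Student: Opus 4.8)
The plan is to unwind the defining decomposition $L = x\mfa + J$ at the level of minimal generators and then read off the initial degrees. By Lemma \ref{decomp-of-L} we have $G(L) = xG(\mfa)\cup G(J)$, and these two pieces are disjoint (as shown in the proof of Lemma \ref{lemma for ses}), so the minimal degree of a generator of $L$ is
\[
G_{min}(L) = \min\bigl\{\,1 + G_{min}(\mfa),\ G_{min}(J)\,\bigr\}.
\]
Hence to prove $G_{min}(L) = G_{min}(\mfa) + 1$ it suffices to show $G_{min}(J) \geq G_{min}(\mfa) + 1$ whenever $G_{min}(L) \geq 2$.

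First I would record what the hypotheses give. Since $\mfa = L:(x) \neq (1) = R$, the ideal $\mfa$ is proper, so $G_{min}(\mfa) \geq 1$; the element of $G(\mfa)$ of least degree gives an element $xm' \in G(L)$ of degree $G_{min}(\mfa)+1$, so already $G_{min}(L) \leq G_{min}(\mfa)+1$. For the reverse inequality, suppose for contradiction that some minimal generator $u$ of $L$ has degree $G_{min}(L) \leq G_{min}(\mfa)$. By disjointness $u$ is not divisible by $x$, so $u \in G(J) \subset \mathbf{k}[y,z]$; write $u = y^a z^b$ with $a+b = \deg u$. Now I would use that $L$ is a lex-segment ideal together with $\deg u \geq 2$: since $x >_{lex} y$, the monomial $x\cdot(u/v)$ for a suitable degree-one divisor $v$ of $u$ is $>_{glex}$ a monomial of the same degree lying in $L$ — concretely, because $u \in L$ and $\deg u \geq 2$, lex-segment-ness forces every monomial of degree $\deg u$ that is $\geq_{glex} u$ to lie in $L$, in particular $x^{\deg u - 1}y$ and more to the point the monomial $x m''$ where $m'' = u/z$ if $z \mid u$, or $m'' = u/y$ if only $y \mid u$. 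In either case $x m'' \in L$ gives $m'' \in L:(x) = \mfa$ with $\deg m'' = \deg u - 1 < G_{min}(\mfa)$, a contradiction. (The hypothesis $G_{min}(L)\geq 2$ is exactly what guarantees $u$ has a degree-one variable divisor to remove and that $x m''$ still has positive degree in $x$, so that this lex comparison is available.)

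Putting the two inequalities together yields $G_{min}(L) = G_{min}(\mfa)+1$, which is the assertion. I expect the only delicate point to be the lex-segment manipulation in the previous paragraph: one must be a little careful that the monomial $x m''$ one produces is genuinely $\geq_{glex}$ some element of $G(L)$ of its own degree — this is automatic since $x m''$ has the same degree as $u \in L$ and $x m'' >_{glex} u$ because $x$ beats any variable among $y,z$ in the lex order — and that removing a variable from $u$ is legitimate, which is where $\deg u = G_{min}(L) \geq 2$ enters. Everything else is bookkeeping with the generating-set identity $G(L) = xG(\mfa)\cup G(J)$ from Lemma \ref{decomp-of-L}.
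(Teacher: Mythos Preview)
Your proof is correct and follows essentially the same idea the paper indicates: the paper gives no separate proof for this lemma, merely recording ``by stability of the ideals $L$ and $\mfa$'' in the statement itself, and your argument is a careful unpacking of exactly that. The key step --- given a minimal generator $u\in G(J)\subset\mathbf{k}[y,z]$ of too-small degree, replace a factor $y$ or $z$ by $x$ to land back in $L$ and hence produce an element of $\mfa=L:(x)$ of degree $\deg u-1$ --- is precisely what stability (or the stronger lex-segment property you invoke) provides. One minor remark: your appeal to ``disjointness'' to conclude $x\nmid u$ is slightly misphrased; the real reason is that any element of $xG(\mfa)$ has degree at least $G_{min}(\mfa)+1$, which your degree assumption on $u$ rules out. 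Also, the hypothesis $G_{min}(L)\geq 2$ is equivalent to $\mfa\neq(1)$ (since $G_{min}(L)=1$ forces $x\in L$ by the lex property), so the two hypotheses in the statement are really one; your use of $\deg u\geq 2$ to guarantee a variable can be stripped from $u$ is fine, though even $\deg u=1$ would yield $1\in\mfa$ and the same contradiction.
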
 
\begin{lemma}
%
%
The Eliahou-Kervaire formula gives the following identities for the Betti numbers of the $J$

\begin{itemize}
 \item $c_{0,k} = c_{1,k+1} + 1$,
 \item $c_{0,j} = c_{1,j+1}$ for all $j \geq k+1$,
 \item if $c_{0,k} = k+1$ then $ c_{1,k+1} = k$ and $c_{i,i+j} = 0$ for all $i = 0,1$ and $j\geq k+1$.
\end{itemize}
\vskip .5cm
We know that $\be_{i,i+j}(L) = a_{i,i+j-1} + D_{i,i+j}$. Thus, it follows that
\[D_{i,i+j} = 
\begin{cases}
0, & \text{ when } j\leq k-1,  \\
\be_{i,i+j}(J,x),  &\text{ when }  j \geq k.
\end{cases}\]

That is,
$D_{0,j} = c_{0,j}, D_{1,j+1} = c_{0,j}+c_{1,j+1}, \ \  \mbox{and} \ \ D_{2,j+2} = c_{1,j+1}$.\\
\end{lemma}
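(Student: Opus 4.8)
The plan is to reduce every assertion to the Eliahou--Kervaire formula --- applied to $J$ as a stable (indeed lex-segment) ideal of $\mathbf{k}[y,z]$ and to $(J,x)=(L,x)$ as a stable ideal of $R$ --- together with the explicit shape of the degree-$j$ minimal generators of a lex-segment ideal in two variables. First I would fix the index translation: for a monomial $u\in\mathbf{k}[y,z]$, the Eliahou--Kervaire index $m(u)$ computed in $R=\mathbf{k}[x,y,z]$ equals $2$ when $u$ is a pure power of $y$ and $3$ when $z\mid u$, while the index of $u$ computed inside $\mathbf{k}[y,z]$ is one smaller. Thus Eliahou--Kervaire for $J$ gives $c_{0,j}=\#G(J)_j$ and $c_{1,j+1}=\#\{u\in G(J)_j:z\mid u\}$; and, by Lemma~\ref{decomp-of-L} (the minimal generators of $L$ not divisible by $x$ are precisely $G(J)$), $D_{i,i+j}=\sum_{u\in G(J)_j}\binom{m(u)-1}{i}$.

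For the three displayed identities I would use that $J$ is lex of initial degree $k$: its degree-$k$ component is the lex segment whose top element is $y^k$, so $G(J)_k=\{y^k,y^{k-1}z,\dots,y^{k-p}z^p\}$ for some $0\le p\le k$, of which exactly $p$ are divisible by $z$; this yields $c_{0,k}=p+1=c_{1,k+1}+1$. The decisive further observation is that $y^d\in J$ for every $d\ge k$, since $y^d=y^{d-k}\cdot y^k$. Hence for $j\ge k+1$ no $u\in G(J)_j$ can be a pure power of $y$ --- otherwise $u=y^j=y\cdot y^{j-1}$ with $y^{j-1}\in J$ would contradict minimality --- so every minimal generator of $J$ of degree $>k$ is divisible by $z$, and therefore $c_{0,j}=c_{1,j+1}$ for $j\ge k+1$. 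Finally, $c_{0,k}=k+1$ forces $p=k$, i.e.\ $(y,z)^k\subseteq J$; combined with $J\subseteq(y,z)^k$ (because $G_{min}(J)=k$) this gives $J=(y,z)^k$, whence $c_{1,k+1}=k$ and $c_{i,i+j}=0$ for $i\in\{0,1\}$ and $j\ge k+1$.

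For the case formula for $D_{i,i+j}$: when $j\le k-1$ we have $G(J)_j=\emptyset$, so $D_{i,i+j}=0$. When $j\ge k$ --- and here we use $k\ge2$, which holds in the cases of interest (for instance whenever $x\notin L$, equivalently $\mfa=L:(x)\ne R$, since then the lex-segment ideal $L$ has no linear generators and neither does $J\subseteq L$) --- I would check that $(J,x)$ is a stable ideal of $R$ with $G(J,x)=\{x\}\cup G(J)$ and $x$ a genuine minimal generator; the only stability verification that is not immediate is $y\cdot(y^a z^b)/z=y^{a+1}z^{b-1}\in J\subseteq(J,x)$ for $y^a z^b\in G(J)$ with $b\ge1$, which is exactly the stability of $J$ inside $\mathbf{k}[y,z]$. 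Since $\deg x=1<j$, we get $G(J,x)_j=G(J)_j$, so Eliahou--Kervaire for $(J,x)$ yields $\be_{i,i+j}(J,x)=\sum_{u\in G(J)_j}\binom{m(u)-1}{i}=D_{i,i+j}$. The closing identities $D_{0,j}=c_{0,j}$, $D_{1,j+1}=c_{0,j}+c_{1,j+1}$, and $D_{2,j+2}=c_{1,j+1}$ then fall out by substituting the two possible values $m(u)-1\in\{1,2\}$ into $\binom{m(u)-1}{i}$ for $i=0,1,2$ and matching with the descriptions of $c_{0,j}$ and $c_{1,j+1}$ found above (using $\binom{0}{0}=\binom{1}{0}=1$ and $\binom{1}{2}=0$).

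The one genuinely delicate point --- the step I expect to need the most care --- is the boundary degree $j=k$: one must know that $G(J)_k$ really is the full lex segment down to $y^{k-p}z^p$ (so that the count $c_{0,k}=p+1$ is sharp), and one must make sure that $x$ is not itself forced into $G(J,x)_k$, which is precisely where the hypothesis $k\ge2$ (equivalently, $L$ has no linear generators) enters. Away from that degree everything is routine bookkeeping with binomial coefficients and the Eliahou--Kervaire formula.
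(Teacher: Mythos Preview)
Your proof is correct and follows exactly the route the paper intends: the lemma is stated in the paper without proof, as a direct consequence of the Eliahou--Kervaire formula, and you have carefully supplied those details (the index shift between $\mathbf{k}[y,z]$ and $R$, the lex-segment shape of $G(J)_k$, the observation that $y^j\notin G(J)$ for $j>k$, and the stability of $(J,x)$). The only trivial slip is the stray $\binom{0}{0}$ in your final parenthetical --- the relevant values are $\binom{1}{i}$ and $\binom{2}{i}$ for $i=0,1,2$ --- but this does not affect the argument.
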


\begin{lemma}\label{degree k}
 $G_{min}(J)\geq G_{max}(\mfa)+1$ where $J\neq 0$.
\end{lemma}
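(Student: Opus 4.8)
The plan is to use the lex-segment property of $L$ together with the decomposition $L = x\mfa + J$ from Lemma~\ref{decomp-of-L}. Write $k = G_{min}(J)$; since $J \neq 0$, this is the smallest degree in which $J$ is nonzero, and $G(J)$ contains a generator of degree $k$. My first goal will be to show that $\mfa$ contains every monomial of degree $k-1$.

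To prove this, I would fix a minimal generator $n \in G(J)_k$ --- which by Lemma~\ref{decomp-of-L} also belongs to $G(L)$ --- and let $m$ be an arbitrary monomial of degree $k-1$ in $R$. Then $xm$ has degree $k$ and is divisible by $x$, whereas $n \in \mathbf{k}[y,z]$ is not; hence $xm > n$ in the graded lexicographic order. Since $\deg(xm) = \deg(n) = k$ and $n \in G(L)$, the definition of a lex-segment ideal forces $xm \in L$. As $m$ was arbitrary, $x$ times every degree-$(k-1)$ monomial lies in $L$, and therefore every degree-$(k-1)$ monomial lies in $L:(x) = \mfa$.

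Finally I would convert this into the degree bound. Since $\mfa$ is an ideal containing every monomial of degree $k-1$, it contains every monomial of degree $\geq k-1$. Hence if $w$ is any monomial of degree $\geq k$, choosing a variable $x_i$ dividing $w$ and writing $w = x_i w'$ with $\deg w' \geq k-1$ shows $w' \in \mfa$, so $w$ is not a minimal generator of $\mfa$. Thus every element of $G(\mfa)$ has degree at most $k-1$, i.e. $G_{max}(\mfa) \leq k-1 = G_{min}(J) - 1$, which is exactly the claim (vacuously true if $\mfa = 0$).

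I do not expect a serious obstacle here, since the statement is essentially a direct consequence of the combinatorics of lex-segments. The two points needing a little care are the monomial comparison in graded lexicographic order --- one must note that every $x$-multiple of a degree-$(k-1)$ monomial dominates every degree-$k$ monomial supported on $y,z$ alone --- and the passage from ``$\mfa$ contains all degree-$(k-1)$ monomials'' to ``$\mfa$ has no minimal generator of degree $\geq k$'', which relies only on $\mfa$ being a monomial ideal.
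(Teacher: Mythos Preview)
Your proof is correct and rests on the same core observation as the paper's: the existence of a degree-$k$ generator of $J \subset \mathbf{k}[y,z]$ forces, via the lex-segment property, every degree-$k$ monomial divisible by $x$ to lie in $L$, hence every degree-$(k-1)$ monomial to lie in $\mfa$. The paper, however, packages this as a two-step contradiction (first assuming $G_{min}(J) < G_{max}(\mfa)$ and deriving that a purported minimal generator $xu$ of $L$ with $u \in G(\mfa)_{G_{max}(\mfa)}$ is not minimal, then separately ruling out $G_{min}(J) = G_{max}(\mfa)$), whereas you argue directly that $\mfa$ contains $\mathfrak{m}^{k-1}$ and so can have no minimal generator in degree $\geq k$. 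Your direct route is cleaner and avoids the redundant case split; the paper's version gains nothing from the contradiction framing.
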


\begin{proof}
Say $G_{max}(\mfa)=t$. Suppose $k = G_{min}(J) <t$, then $y^k \in G(L)_k$. So, by lex-order, all monomials $u$ of degree k divisible by $x$ are in $L$. Thus, $u$ is 
in the form $x^iy^jz^s$ where $i\geq 1,\  i+j+s = k$. As $G_{max}(\mfa)=t > k$, there is a minimal generator $v\in L$ of degree $t+1$ such that
$x|v$. Therefore, $v$ can be written as $v = w_1\cdot w_2$ where $w_1$ and $w_2$ are two monomials such that  $\deg w_1 = k$ and $w_2$ is not divisible by $x$ and $\deg w_2 = t-k+1$. Since all degree $k$ monomials divisible by $x$ are in $L$, $w_1$ is in $L$ and so $v$ cannot be a minimal generator.
Thus $k\geq t$.

Now, we need to show that the equality is not possible, i.e. $k=t$ is not possible. We prove this by contradiction. To this end, suppose $k=t$. So $y^k$ is a minimal generator in $L$ and since $t=k$ we can find at least one minimal generator $u$ of $\mfa$ with degree $k$. Then  $xu$  becomes a minimal generator in $L$ of degree $k+1$. As all monomials $v$ of degree $k$  divisible by $x$ are in $L$ then there is a monomial $w$ such that $v=xw$ and $w|u$. This contradicts that $u$ is a minimal generator of $\mfa$. Hence $k\neq t$. i.e. $k\geq t+1$
\end{proof}
Suppose Betti diagrams for $\mfa$ and $J$ are 
\begin{center}
\begin{table}[h]
$ \begin{tabular}{c|ccc}
$\be(\mfa)$ & 0 & 1 & 2\\
\hline
$1$ & $a_{0,1}$ & $a_{1,2}$ & $a_{2,3}$\\
$2$ & $a_{0,2}$ & $a_{1,2}$ & $a_{2,4}$\\
\vdots & \vdots & \vdots & \vdots \\
$k-1$ & $a_{0,k-1}$ & $a_{1,k}$ & $a_{2,k+1}$

\end{tabular}$ \ \ \   and  \ \ \ $ \begin{tabular}{c|cc}
$\be(J)$ & 0 & 1 \\
\hline
$k$ & $c_{0,k}$ & $c_{0,k}-1$\\
$k+1$ & $c_{0,k+1}$ & $c_{0,k+1}$\\
\vdots & \vdots & \vdots \\
$G_{max}(J)$ & $c_{0,G_{max}(J)}$ & $c_{0,G_{max}(J)}$
\end{tabular}$,
\vspace*{0.3 cm}
\caption{The Betti diagrams of $\mfa$ and $J$.}
\end{table}
\end{center}
Therefore the short exact sequence in Lemma \ref{lemma for ses} together with all other Lemmas \ref{decomp-of-L}, \ref{initials} and \ref{degree k} we discuss in this section yield that the Betti diagram for $L$ has the following form: 
\begin{center}
\begin{table}[h]
\begin{tabular}{c|ccc}
$\be(L)$ & 0 & 1 & 2\\
\hline
$2$ & $a_{0,1}$ & $a_{1,2}$ & $a_{2,3}$\\
$3$ & $a_{0,2}$ & $a_{1,2}$ & $a_{2,4}$\\
\vdots & \vdots & \vdots & \vdots \\
$k-1$ & $a_{0,k-2} $ & $a_{1,k-1}$ & $a_{2,k}$\\
$k$ & $a_{0,k-1}+c_{0,k} $ & $a_{1,k}+2c_{0,k}-1$ & $a_{2,k+1}+c_{0,k}-1$\\
$k+1$ & $c_{0,k+1}$ & $2c_{0,k+1}$ & $c_{0,k+1}$\\
\vdots & \vdots & \vdots & \vdots \\
$G_{max}(L) = G_{max}(J)$ & $c_{0,G_{max}(L)}$ & $2c_{0,G_{max}(L)}$ & $c_{0,G_{max}(L)}$
\end{tabular}
\vspace*{0.3 cm}
\caption{The Betti diagram of $L$}
\label{Betti L} 
\end{table}
\end{center}
If the Betti diagrams of the ideals $\mfa$ and $J$ ``overlap" then they do only at the $k^{th}$ row of the $\be(L)$ as in the above diagram. In other words, the Betti numbers of $\be(L)$ in the $k^{th}$ row may be expressed in terms of both the Betti numbers of $\be(\mfa)$ and $\be(J)$ in their $k-1^{th}$ and first row, respectively.


\section{The Boij-S\"{o}derberg Decompositions of $L$ and $L :(x)$}
\label{thm1section}


In this section we prove Theorem \ref{thm1}. Let
$\bf{d^0} < \bf{d^1} < ...<\bf{d^i}<... < \bf{d^t}$ be the BS chain of all length $3$ top degree sequences for $\mfa = L : (x)$.
Suppose the chain of the first $t+1$ top degree sequences of the BS decomposition of the Betti diagram of $L$ is 
$\bf{\bar{d}^0} < \bf{\bar{d}^1} < ...< \bf{\bar{d}^i}<...<\bf{\bar{d}^t}$. Then 
${\bf \bar{d}^i} = {\bf d^i} + {\bf1} = (d^i_0+1, d^i_1+1, d^i_2+1 )$ for all $i = 0,1,...,t$ with 
exactly the same coefficients, except possibly the coefficient of $\pi_{{\bf \bar{d}^t}}$. 

Recall that, for a given top degree sequence ${\bf d} = (d_0, d_1, d_2)$, the ``normalized'' pure diagram $\pi_{\bf{d}}$ can be obtained as
\[
\be_{i,i+j}(\pi_{\bf{d}}) = 
\begin{cases}
  0 & \text{if\ } i+j\neq  d_i \\
  \lambda\prod\limits_{r=0, r\neq i}^{2}\frac{1}{|d_i-d_r|} & \text{if\ } i+j=d_i,\ \ \mbox{where\ } \lambda = \lcm\left(\prod\limits_{r=0, r\neq i}^{2} |d_i-d_r|, i = 0,1,2\right).
 \end{cases}
 \]
Thus, this formula provides pure diagrams with integer entries. From now on,  pure diagrams have integer entries. Let ${\bf d^0} = (d^0_0, d^0_1, d^0_2)$ be the top degree sequence for the Betti diagram of $\mfa$.
if $d^0_2 < k+1$, that is, $d^0_0<d^0_1<d^0_2<k+1$,  so $d^0_0 < k-1$ then we see that $\be_{i,i+j}(\mfa) = \be_{i,i+j+1}(L)$ for all $j = 0,1,...,k-2$ as in the table \ref{Betti L}.
This essentially follows from the fact that the Betti diagrams of $\mfa$ and $J$ may only overlap on the $k$-th row in the Betti diagram of $L$. As $L = x\mfa + J$ and degree shift
due to multiplication by $x$ the top degree sequence of $\be(L)$ will be ${\bf d^0} + {1}$. Thus $\be(L)-\al_0\pi_{d^0+1}$ 
becomes the first step of the BS-decomposition of $\be(L)$. In fact, we could repeat this process for all degree sequence ${\bf d^s}$ such that $d_2^s < k+1$.

Suppose $d^s_2 < k+1$ for $s=0, 1, ..., l-1$ and $d^s_2 \leq k+1$ for $s \geq l$ for some $l > 0$. So we assume that the next degree sequence after ${\bf d^{l-1}}$ is ${\bf d^l} = (d^l_0, d^l_1, d^l_2 = k+1)$. Therefore, after $l$ steps 
in the decomposing both $\be(\mfa)$ and $\be(L)$, we would get the remaining diagrams  
$$\be(\mfa)-\sum\limits_{s=0}^{l-1}\al_s\pi_{{\bf d^s}} =: \tilde{\be}(\mfa) \ \ \ \ \mbox{and} \ \ \ \ \be(L)-\sum\limits_{s=0}^{l-1}\al_s\pi_{{\bf d^s+1}} =: \tilde{\be}(L).$$ 
Let ${\bf d^l} = (d^l_0, d^l_1, d^l_2) $ be the next top degree sequence of the Betti diagram for $\mfa$ and $d^l_2=k+1$ so above paragraph shows that 
${\bf d^l+1}$ becomes the next top degree sequence of Betti diagram for $L$.
Then the remaining diagram after the first $l$ steps of the BS
decompositions for both $\mfa$ and $L$ look like as following,
\begin{center}
\begin{table}[h]
$\be(\mfa)-\sum\limits_{s=0}^{l-1}\al_s\pi_{{\bf d^s}} =\small\begin{tabular}{c|ccc}
$\tilde{\be}(\mfa)$ & 0 & 1 & 2\\
\hline
$d^l_0$ & $\tilde{\be}_{0,d^l_0}(\mfa)$ & - & -\\
\vdots & \vdots & \vdots & \vdots \\
$d^l_1-1$ & $a_{0,d^l_1-1}$ & $\tilde{\be}_{1,d^l_1}(\mfa)$ & -\\
\vdots & \vdots & \vdots & \vdots \\
$d^l_2-2=k-1$ & $a_{0,d^l_2-2}$ & $a_{1,d^l_2-1}$ & $a_{2,d^l_2}$
\end{tabular}$
\\
\caption{Remaining diagram after $l$ steps for $\be(\mfa)$}
\label{remaining a}
\end{table}
\end{center}
and similarly, 
\begin{center}
\begin{table}[h]
$\be(L)-\sum\limits_{s=0}^{l-1}\al_s\pi_{{\bf d^s+1}} = \small\begin{tabular}{c|ccc}
$\tilde{\be}(L)$ & 0 & 1 & 2\\
\hline
$d^l_0+1$ & $\tilde{\be}_{0,d^l_0+1}(L) $ & - & -\\
\vdots & \vdots & \vdots & \vdots \\
$d^l_1$ & $a_{0,d^l_1-1}$ & $\tilde{\be}_{1,d^l_1+1}(L)$ & -\\
\vdots & \vdots & \vdots & \vdots \\
$d^l_2-1=k$ & $a_{0,d^l_2-2}+c_{0,d^l_2-1}$ & $a_{1,d^l_2-1}+c_{0,d^l_2-1}+c_{1,d^l_2}$ & $\tilde{\be}_{2,d^l_2+1}(L) $\\
$d^l_2$  & $c_{0,d^l_2}$ & $c_{0,d^l_2}+c_{1,d^l_2+1}$ & $c_{1,d^l_2+1}$\\
\vdots & \vdots & \vdots & \vdots 
\end{tabular}$
\\
\caption{Remaining diagram after $l$ step for $\be(L)$.}
\label{remaining L}
\end{table}
\end{center}
By construction of $\be(L)$, we deduce that
\begin{align*}
\tilde{\be}_{0,d_{0}^l+1}(L) &= \tilde{\be}_{0,d_{0}^l}(\mfa) \, \mbox{ as } \,  d_0^l+1 < k, \\
\tilde{\be}_{1,d_{1}^l+1}(L) &= \tilde{\be}_{1,d_{1}^l}(\mfa) \,  \mbox{ as } \, d_1^l < k, \, \mbox{ and } \\ 
\tilde{\be}_{2,d_{2}^l+1}(L) &= a_{2,d^l_2} + c_{1,d_2^l} \, \mbox{ as } \,  d_2^l-1 = k.
\end{align*}
The algorithm \ref{BSalgorithm} exposes the coefficient of the pure diagram $\pi_{{\bf d^l}}$ to be
\begin{eqnarray}\label{l-coefficient of a}
\al_l =\min\left\{ \displaystyle\frac{\tilde{\be}_{0,d_0^l}(\mfa)}{\be_{0,d_0^l}(\pi_{{\bf d^l}})}, 
                                   \displaystyle\frac{\tilde{\be}_{1,d_1^l}(\mfa)}{\be_{1,d_1^l}(\pi_{{\bf d^l}})}, 
                                   \displaystyle\frac{a_{2,d_2^l}}{\be_{2,d_2^l}(\pi_{{\bf d^l}})}\right\}.
\end{eqnarray}
and similarly for the BS-decomposition of $\be(L)$ there is a rational number
$\tilde{\al_l}$  as the coefficient of the pure diagram $\pi_{{\bf d^l +1}}$ such that

 \begin{eqnarray}\label{l-coefficient of L}
\tilde{\al_l} = \min \left\{ \displaystyle\frac{\tilde{\be}_{0,d_0^l}(\mfa)}{\be_{0,d_0^l}(\pi_{{\bf d^l}})}, 
                                   \displaystyle\frac{\tilde{\be}_{1,d_1^l}(\mfa)}{\be_{1,d_1^l}(\pi_{{\bf d^l}})}, 
                                   \displaystyle\frac{a_{2,d_2^l} + c_{1,d^l_2}}{\be_{2,d_2^l}(\pi_{{\bf d^l}})}\right\}
\end{eqnarray}
Hence we just need to look at the $k$-th row of the Betti diagram of $L$ if $\be(\mfa)$ and $\be(L)$ overlap. Thus, we only need 
to think about the top degree sequences $d^s$ of length $3$ of $\be(\mfa)$ such that $d^s_2 = k+1$.\\
\textit{\textbf{Case 1}}:
Let $a_{2,k+1}$ be eliminated in the $(l+1)$-th step of the decomposition algorithm of $\be(\mfa)$. In other words,
${\bf d^l} = (d_0^{l},d_1^{l}, d_2^{l})$ is of length $3$, whereas ${\bf d^{l+1}} = (d_0^{l+1},d_1^{l+1})$
has length $2$. It shows that $\bf{d^0} < \bf{d^1} < ...<\bf{d^i}<... < \bf{d^{l}}$ are all length $3$ degree sequences in the decomposition of $\be(\mfa)$.
Hence, BS decomposition of $\be(\mfa)$ is 
$$\be(\mfa) = \sum\limits_{s=0}^{l}\al_s\pi_{{\bf d^s}} + [\mbox{all pure diagrams of length less than $3$}].$$
Recall that we only focus on the degree sequences of length three. Since the length of $d^{l+1}$ is two, we do not need to pay attention to the $(l+2)$-th step in the decomposition.
Besides that the table \ref{remaining L} already shows that ${\bf d^l + 1}$ is top degree sequence of the remaining diagram of $L$, $\tilde{\be}(L)$.
Therefore, the first $(l+1)$-th top degree sequences of Boij-S\"{o}derberg decomposition
of $\be(L)$ is $${\bf d^0+1}<{\bf d^1+1}<...<{\bf d^{l}+1}$$ 
where the coefficients $\tilde{\al}_i = \al_i$ for $i = 0,1,...,l-1$.\\
\textit{\textbf{Case 2}}:
Suppose that $a_{2,k+1}$ is not eliminated in the $(l+1)$-th step of the 
decomposition of 
$\be(\mfa)$. Moreover we assume that it will vanish in the $(t+1)$-th step for 
some $t>l$. That is, the chain of the degree sequences in the BS decomposition 
of $\be(\mfa)$ is
$$\bf d^0< d^1< ...< d^l< ...<d^t<...<d^n$$ where
\begin{itemize} 
\item for $s=0,1,...,l-1$, \ ${\bf d^s}=(d_0^s, d_1^s, d_2^s)$ has length $3$ such that $d^s_2< k+1$,
\item for $s=l,...,t$, \ ${\bf d^s}=(d_0^s, d_1^s, d_2^s)$ has length $3$ such that $d^s_2=k+1$,
\item for $s=t
+1,...,n$, \ ${\bf d^s}= (d^s_0, d^s_1)$ has length $2$. 
\end{itemize}
As the entries only above the $(k-1)$-th row are eliminated until the $(t+1)$-th step of the decomposition, it is easy to observe the remaining diagram of $L$.  In table \ref{Betti L}, we have seen that the entries of both $\be(\mfa)$ and $\be(L)$ above the $k$-th row are the same. Therefore, the remaining diagram of $\be(\mfa)$ after subtracting the first $t$ pure diagrams is
\begin{center}$\be(\mfa)-\sum\limits_{s=0}^{t-1}\al_s\pi_{{\bf d^s}} =\small\begin{tabular}{c|ccc}
$\tilde{\be}(\mfa)$ & 0 & 1 & 2\\
\hline
$d^t_0$ & $\tilde{\be}_{0,d^t_0}(\mfa)$ & - & -\\
\vdots & \vdots & \vdots & \vdots \\
$d^t_1-1$ & $a_{0,d^t_1-1}$ & $\tilde{\be}_{1,d^t_1}(\mfa)$ & -\\
\vdots & \vdots & \vdots & \vdots \\
$d^t_2-2=k-1$ & $a_{0,d^t_2-2}$ & $a_{1,d^t_2-1}$ & $\tilde{\be}_{2,d^t_2}(\mfa)$
\end{tabular}$
\end{center} 
where $ \tilde{\be}_{i,d^t_i}(\mfa) = \be_{i,d^t_i}(\mfa)-\sum\limits_{s=0}^{t-1}\al_s\be_{i,d^t_i}(\pi_{{\bf d^s}})$, for $i=0,1,2$. Furthermore, as in (\ref{l-coefficient of a}) and (\ref{l-coefficient of L}), we have similar relations between 
the coefficients in both BS decomposition of $\be(\mfa)$ and $\be(L)$ during their first $t$ steps.
The coefficients of the pure diagrams $\pi_{{\bf d^s}}$ in the decomposition of $\be(\mfa)$  for $s = l,...,t-1$ is
$$\al_s = \min\left\{ \displaystyle\frac{\tilde{\be}_{i,d_i^s}(\mfa)}{\be_{i,d_i^s}(\pi_{{\bf d^s}})}, \ \ \mbox{for} \ \ i=0,1,2\right\}.$$
Similarly, the corresponding coefficient $\tilde{\al_s}$ of the pure diagram $\pi_{{\bf d^s +1}}$ in the decomposition of $\be(L)$ becomes
\begin{eqnarray*}\tilde{\al_s} &=&\min\left\{ \displaystyle\frac{\tilde{\be}_{i,d_i^s+1}(L)}{\be_{i,d_i^s+1}(\pi_{{\bf d^s+1}})},\ \  \mbox{for} \ \  i=0,1,2\right\}\\
                                &= &\min\left\{ \displaystyle\frac{\tilde{\be}_{0,d_0^s}(\mfa)}{\be_{0,d_0^s}(\pi_{{\bf d^s}})}, 
                                   \displaystyle\frac{\tilde{\be}_{1,d_1^s}(\mfa)}{\be_{1,d_1^s}(\pi_{{\bf d^s}})}, 
                                   \displaystyle\frac{\tilde{\be}_{2,d_2^s}(\mfa) + c_{1,d^s_2}}{\be_{2,d_2^s}(\pi_{{\bf d^s}})}\right\}.
 \end{eqnarray*}
 We assume that any of the entries of the corresponding to $d^s_i$ for $i=0,1$ is eliminated where $s = l,..,t-1$. Thus
 $$\al_s < \frac{\tilde{\be}_{2,d_2^s}(\mfa)}{\be_{2,k+1}(\pi_{d^s})}, \ \ \ \mbox{where} \ \ \ d_2^s = k+1.$$
 So it follows that 
 $$\displaystyle\frac{\tilde{\be}_{2,d_2^s}(\mfa)}{\be_{2,k+1}(\pi_{{\bf d^s}})} < \displaystyle\frac{\tilde{\be}_{2,d_2^s}(\mfa)+c_{1,k+1}}{\be_{2,k+1}(\pi_{{\bf d^s}})}.$$
 Hence $\tilde{\al}_s=\al_s$ for $s=l,...,t-1$. However, this equality may not be true for the coefficients $\al_t$ and $\tilde{\al}_t$ since $\tilde{\be}_{2,d^t_2}(\mfa)$ will be
eliminated in the next step. So $\al_t \leq \tilde{\al}_t$.
Hence the remaining diagram of $\be(L)$ is \\
$\tilde{\be}(L) :=\be(L)-\sum\limits_{s=0}^{t-1}\al_s\pi_{{\bf d^s+1}} = \small
\begin{tabular}{c|ccc}
$ $ & 0 & 1 & 2\\
\hline
$d^t_0+1$ & $\tilde{\be}_{0,d^t_0+1}(L) $ & - & -\\
\vdots & \vdots & \vdots & \vdots \\
$d^t_1$ & $a_{0,d^t_1-1}$ & $\tilde{\be}_{1,d^t_1+1}(L)$ & -\\
\vdots & \vdots & \vdots & \vdots \\
$d^t_2-1=k$ & $a_{0,d^t_2-2}+c_{0,d^t_2-1}$ & $a_{1,d^t_2-1}+c_{0,d^t_2-1}+c_{1,d^t_2}$ & $\tilde{\be}_{2,d^t_2}(L)$ \\
$k+1$ & $c_{0,k+1}$ & $c_{0,k+1}+c_{1,k+2}$ & $c_{1,k+2}$ \\
\vdots & \vdots & \vdots & \vdots \\
\end{tabular}$
where 
\begin{align*}
\tilde{\be}_{0,d_{0}^t+1}(L) &= \tilde{\be}_{0,d_{0}^t}(\mfa) \, \mbox{ as } \,  d_0^t+1 < k, \\
\tilde{\be}_{1,d_{1}^t+1}(L)& = \tilde{\be}_{1,d_{1}^t}(\mfa) 
\, \mbox{ as } \, d_1^t < k, \, \mbox{ and } \\ 
\be_{2,d_{2}^t+1}(L) &= \tilde{\be}_{2,d^t_2}(\mfa) + c_{1,d_2^t} \ \ \mbox{as} \ \ d_2^t-1 = k.
\end{align*} 
This will bring us back to {\bf Case 1}; ${\bf d^t} = (d^t_0,d^t_1,d^t_2)$ is the last top degree sequence of length $3$ in the BS decomposition of $\be(\mfa)$.
The remaining diagram above clearly shows us that ${\bf d^t+1} = (d^t_0+1,d^t_1+1,d^t_2+1)$ shows up as a degree sequence in the BS decomposition of $\be(L)$ in the next step.

As a summary, if $\bf d^0<d^1<...<d^t$ is the chain of the all top degree sequences of length $3$ in the BS-decomposition of 
$\be(\mfa)$ with coefficients $\al_s$ for $s=0,1,..,t$.
Then
$\bf d^0+1<d^1+1<...<d^t+1$ becomes the first $t$ top degree sequences of length $3$ in the BS-decomposition of $\be(L)$ with
$\tilde{\al}_s= \al_s \ \ \mbox{if}\ \ s<t$ and $\tilde{\al}_t \geq \al_t$. 

Hence we have shown that the "beginnings`` of the chain of the degree sequences in the BS decompositions of $\be(L) = x\mfa + J$ and $\be(\mfa)$ are identical. We believe that there is a analogous result for a lex-ideal in $\mathbf{k}[x_1, ..., x_n]$.
\begin{remark} Let $L= (x_1)\mfa+J$ in $R=\mathbf{k}[x_1,x_2,...,x_n]$ be a lex-segment ideal, then $\mfa$ is also lex-segment ideal in $R$ and $J$ turns out to be a stable ideal of $\codim n-1$ in $\mathbf{k}[x_2,...,x_n]$. 

Suppose 

\begin{eqnarray*}\minCDarrowwidth 5pt \begin{CD}
F_{n-1} @>>> \cdots  @>>> F_i @>>> \cdots @>>> F_1 @>>> J @>>> 0    
                \end{CD}
\end{eqnarray*}
\begin{eqnarray*}\minCDarrowwidth 5pt \begin{CD}
G_n @>>> \cdots  @>>> G_i @>>> \cdots @>>> G_1 @>>> \mfa @>>> 0    
                \end{CD}
\end{eqnarray*}
 
are the minimal free resolutions of $J$ and $\mfa$, respectively.  We get the same short exact sequence in Lemma \ref{lemma for ses}, 
then by mapping cone we have the following minimal free resolution for $L$
 \begin{center}
 $0 \rightarrow G_n(-1) \oplus F_{n-1}(-1) \rightarrow ... \rightarrow  G_2(-1) \oplus F_2 \oplus F_1(-1) \rightarrow  G_1(-1) \oplus F_1 \rightarrow L$. 
 \end{center} 
 
So it yields 

$$\be_{i,i+j}(L) = \begin{cases}
                      \be_{i,i+j-1}(\mfa)  &\mbox{where} \ \ i=0,1,...,n-1 \ \ \mbox{and} \ \ i+j< G_{min}(J),\\
                      \be_{i,i+j-1}(\mfa) + \sum\limits_{t=i-1}^i \be_{t,j+t}(J) &\mbox{where}  \ i=0,1,...,n-1 \ \ \mbox{and} \ \ i+j\geq G_{min}(J).
                     \end{cases}$$ 

By using lex-order properties of $L$ and $a$, as we did in case $n=3$, we conclude that the Betti diagrams of $\mfa$ and $J$ either overlap only on the $G_{min}(J)$-th row of the Betti diagram of $L$ or
do not overlap at all. Identify $k:= G_{min}(J)$. Therefore, the Betti diagram of $L$ in $\mathbf{k}[x_1, ..., x_n]$ is

\begin{table}[h]
\begin{center}
\resizebox{\columnwidth}{!}{
\begin{tabular}{c|ccccc}
$\be(L)$ & 0 & 1 & 2 & ... & n-1\\
\hline
$2$ & $a_{0,1}$ & $a_{1,2}$ & $a_{2,3}$ & ... & $a_{n-1,n}$\\
$3$ & $a_{0,2}$ & $a_{1,3}$ & $a_{2,4}$ & ... & $a_{n-1,n+1}$\\
\vdots & \vdots & \vdots & \vdots & ... & \vdots \\
$k-1$ & $a_{0,k-2} $ & $a_{1,k-1}$ & $a_{2,k}$ & ... & $a_{n-1,k+n-3}$\\
$k$ & $a_{0,k-1}+c_{0,k} $ & $a_{1,k}+c_{0,k}+c_{1,k+1}$ & $a_{2,k+1}+c_{1,k+1}+c_{2, k+2}$ & ... & $a_{n-1,k+n-2}+c_{n-1,k+n-1}$\\
$k+1$ & $c_{0,k+1}$ & $c_{0,k+1} + c_{1,k+2}$ & $c_{1,k+2} + c_{2,k+3}$ & ... & $c_{n-1,k+n-1}$\\
\vdots & \vdots & \vdots & \vdots & ... & \vdots \\
$G_{max}(L) = G_{max}(J)$ & $c_{0,G_{max}(L)}$ & $c_{0,G_{max}(L)} + c_{1,G_{max}(L)+1}$ & $c_{1,G_{max}(L)+1} + c_{2,G_{max}(L)+2}$ & ... & $c_{n-1,G_{max}(L)+n-1}$
\end{tabular}
}
\vspace*{0.3 cm}
\caption{Betti diagram of $L$ in $\mathbf{k}[x_1,...,x_n]$}
\end{center}
\end{table}
Henceforth, proof of Theorem \ref{thm1} can be easily modified for the polynomial ring of $n$ variables.
\end{remark}                     
\begin{corollary} Let $L= (x_1)\mfa+J$ in $R=\mathbf{k}[x_1,x_2,...,x_n]$ be a lex-segment ideal. If 
$\pi_{{\bf d}^0} < \pi_{{\bf d}^1} < ... < \pi_{{\bf d}^t}$ 
are all pure diagrams of length $n$ in the BS decomposition of $\mfa$, where  
${\bf d}^i = (d_0^i, d_1^i,..., d_{n-1}^i)$ for $i = 0,1,...,t$. Then the chain of pure diagrams
\[\pi_{{\bf \bar{d}^0}} < \pi_{{\bf \bar{d}^1}} < ... <\pi_{{\bf \bar{d}^t}}\]
appears in the beginning of the BS decomposition of $L$ such that
\[{\bf \bar{d}^i} = {\bf d^i + 1} = (d_0^i +1, d_1^i +1,.., d_{n-1}^i +1).\] 
\end{corollary}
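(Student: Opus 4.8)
The plan is to follow essentially the same strategy that proved Theorem~\ref{thm1}, now carried out in the polynomial ring $R = \mathbf{k}[x_1,\dots,x_n]$, and to note that only cosmetic changes are needed. First I would invoke the remark preceding this corollary: for a lex-segment ideal $L = (x_1)\mfa + J$ with $\mfa = L:(x_1)$ and $J$ a stable ideal in $\mathbf{k}[x_2,\dots,x_n]$, the Betti diagram of $L$ has entries $\be_{i,i+j}(L) = \be_{i,i+j-1}(\mfa)$ whenever $i+j < k := G_{\min}(J)$, and the Betti diagrams of $\mfa$ (shifted by $1$) and $J$ overlap only in the $k$-th row of $\be(L)$. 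This is the structural input that makes the argument work in any number of variables.

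Next I would run the BS decomposition algorithm on $\be(\mfa)$ and on $\be(L)$ in parallel, tracking only the length-$n$ degree sequences. As long as the top degree sequence ${\bf d}^s = (d_0^s,\dots,d_{n-1}^s)$ of the current remaining diagram of $\mfa$ satisfies $d_{n-1}^s < k$ (equivalently $d_0^s < k-n+2$, so the whole pure diagram lives strictly above the overlap row), the degree-shift by multiplication by $x_1$ forces ${\bf d}^s + {\bf 1}$ to be the corresponding top degree sequence of the remaining diagram of $L$, and step (2) of Algorithm~\ref{BSalgorithm} gives the identical coefficient because every relevant entry of $\tilde{\be}(L)$ equals the corresponding entry of $\tilde{\be}(\mfa)$. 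Once we reach a degree sequence ${\bf d}^l$ with $d_{n-1}^l = k$, the overlap row enters the picture, and the coefficient computed from $\be(L)$ uses the entry $\tilde{\be}_{n-1,d_{n-1}^l}(\mfa) + (\text{contribution from } J)$ in the last slot; since this entry is only \emph{larger}, the minimum defining the coefficient is still governed by one of the top $n-1$ slots (where the entries agree) for all such ${\bf d}^s$ with $l \le s < t$, so $\tilde{\al}_s = \al_s$ there, exactly as in Case~2 of the proof of Theorem~\ref{thm1}. For the very last length-$n$ degree sequence ${\bf d}^t$ one only gets $\tilde{\al}_t \ge \al_t$, which is all that is claimed (the corollary only asserts that the chain $\pi_{{\bf \bar{d}^0}} < \cdots < \pi_{{\bf \bar{d}^t}}$ appears in the beginning of the decomposition of $L$, not equality of the last coefficient).

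Finally I would verify that after subtracting these $t+1$ pure diagrams $\tilde{\al}_s \pi_{{\bf d}^s+{\bf 1}}$ from $\be(L)$, the last slot in the overlap row has been cleared, so that no further length-$n$ pure diagram can arise from that row; combined with the fact that all entries of $\be(L)$ below the $k$-th row come purely from $J$ (and hence contribute only pure diagrams of length $\le n-1$), this shows $\pi_{{\bf \bar{d}^0}} < \cdots < \pi_{{\bf \bar{d}^t}}$ really do occur consecutively at the start of the BS decomposition of $L$, with ${\bf \bar{d}^i} = {\bf d}^i + {\bf 1}$.

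The main obstacle is bookkeeping rather than a genuinely new idea: one must be careful that the indices $d_0^s < k-n+2$ etc.\ translate correctly in $n$ variables (in the $n=3$ case this was the condition $d_0^s < k-1$), and that the overlap row is genuinely the \emph{only} row where $\be(\mfa)(-1)$ and $\be(J)$ interact — this is exactly the content of the remark's Betti-diagram table, so it may be cited directly. Hence the proof is a routine adaptation: ``proof of Theorem~\ref{thm1} can be easily modified for the polynomial ring of $n$ variables,'' as the remark already states, and the corollary is the summary of that modified argument restricted to the length-$n$ part of the chain.
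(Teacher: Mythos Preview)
Your approach matches the paper's: the corollary is stated right after the remark that the proof of Theorem~\ref{thm1} ``can be easily modified for the polynomial ring of $n$ variables,'' and your first two paragraphs carry out exactly that modification, invoking the remark's Betti-diagram table for the structural input and running the algorithm in parallel.

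One correction: drop your third paragraph. The parenthetical that entries of $\be(L)$ below row $k$ ``contribute only pure diagrams of length $\le n-1$'' is false --- from the mapping cone, column $n-1$ of $\be(L)$ in row $j>k$ carries $c_{n-2,\,j+n-2}$, which is generally nonzero (cf.\ Table~\ref{Betti L} already in the case $n=3$, where the last column below row $k$ has entries $c_{0,j}$). The paper says explicitly that there may be further length-$n$ pure diagrams in $\be(L)$ beyond those coming from $\mfa(-1)$; this is the ``middle part'' discussed in Section~\ref{observation}. Likewise, the last overlap-row slot need not be cleared after step $t$, since $\tilde{\al}_t$ is a minimum over all columns and may be dictated by a column other than the last. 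Fortunately none of this is needed: the corollary only asserts that the shifted chain appears \emph{at the beginning} of the decomposition of $L$, not that it exhausts the length-$n$ part, and your first two paragraphs already establish that.
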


\section{The Boij-S\"oderberg Decomposition for  $L$ and $(L,x)$}
\label{thm2newsection}

In Theorem \ref{thm1}, we have showed that if $\pi_{{\bf d^i}}$, a type-${\bf d^i} = (d^i_0, d^i_1, d^i_2)$ pure diagram, appears as a summand in the BS decomposition of $\be(\mfa)$ then, $\pi_{\bf (d+1)^i}$ for ${\bf (d+1)^i} = (d^i_0+1, d^i_1+1, d^i_2+1)$ show up as the $i$-th summand of the BS decomposition of $\be(L)$,  with the same BS coefficient possible except for the last one. In this section, we now consider the end of the BS decomposition of $L$ in $R=\mathbf{k}[x,y,z]$ and show that all degree sequences of length less than $3$ in the decomposition of $\be(L,x) = \be(J,x) $ occurs precisely as all degree sequences of length less that $3$ in the decomposition for $L$. 
We prove our claim for all Artinian lex-segment ideals $L = \mfa(x)+J$ except the ones of the form $ L = x(x,y,z^t) + J$ where $J$ is different that $(y,z)^{G_{min}(J)}$ and $1 < t < k-1$. The main idea of proof is induction whose base step also requires some tedious case analyzing of the decompositions of both $\be(L)$ and $\be(L,x)$. Then finally schemes of case analyzing help us to demonstrate how all degree sequences of length less than $3$  for both $L$ and $(L,x)$ coincide entirely with their coefficients.

Furthermore, we conjecture that the statement of the Theorem \ref{thm2} is also true for  $L= x(x,y,z^t) + J$, whereas proof of that situation requires a case analyzing which becomes infeasible.  

\subsection{Decomposing the Betti diagram of ${\mathbf (L,x)}$}

First we observe that the same pure diagrams $\pi_{{\bf d}}$, for all ${\bf d}$ of length less than $3$, as in the decomposition of the Betti diagram of $L$. To  show this,  it suffices to check on the remaining diagrams after several steps of the decomposition algorithm for $(L,x)$.  We also notice that, for all $i > k$, $i$-th row of the Betti diagram of $(L,x)$ has the form $| c_{0,i} \, \ , 2c_{0,i+1} \, \, c_{0,i} |$ where $c_{0,i} < k$. 

Say $G_{max}(L,x)= G_{max}(L)=G_{max}(J)= : n$. 
Assume $k=G_{min}(J)>2$ and $n\geq k+1$. Then the Betti diagram of the lex ideal $(L,x)$  is

\begin{center}$\be(L,x) =\small \begin{tabular}{c|ccc}
$ $ & 0 & 1 & 2\\
\hline
$1$ & \boxed{$1$} & $-$ & $-$\\
$2$ & $-$ & $-$ & $-$\\
\vdots & \vdots & \vdots & \vdots \\
$k-1$ & $-$ & $-$ & $-$\\
$k$ & $c_{0,k} $ & $\boxed{2c_{0,k}-1}$ & $\boxed {c_{0,k}-1}$\\
$k+1$ & $c_{0,k+1}$ & $2c_{0,k+1}$ & $c_{0,k+1}$\\
\vdots & \vdots & \vdots & \vdots \\
$n$ & $c_{0,n}$ & $2c_{0,n}$ & $c_{0,n}$\\
\end{tabular}$
\end{center}


First degree sequence is ${\bf \bar{d}^0}=(1,k+1,k+2)$, then we have $\be(L,x) - \ga_0\pi_{{\bf \bar{d}^0}}$ where 
\[\pi_{{\bf \bar{d}^0}}=\tiny \begin{tabular}{c|ccc}
$ $ & 0 & 1 & 2\\
\hline
$1$ & $\bf 1$ & $-$ & $-$\\
\vdots & \vdots & \vdots & \vdots \\
$k$ & $- $ & $\bf k+1$ & $\bf k$
\end{tabular} \, \, \, \text{ and } \, \, \,
 \ga_{0}=\min\left\{1,\displaystyle\frac{2c_{0,k}-1}{k+1}, \displaystyle\frac{c_{0,k}-1}{k}\right\} = \displaystyle\frac{c_{0,k}-1}{k}.\]

Notice that Artinian lex ideal property of $L$ yields $c_{0,k} \leq k+1$. Thus $\frac{c_{0,k}-1}{k} \leq 1$.
 
If $c_{0,k}=k+1$, i.e. $G_{max}(L)=k$, then the BS decomposition of $(L,x)$ becomes
 $$\be(L,x) = (1)\pi_{(1,k+1,k+2)}+(k)\pi_{(k,k+1)}+(1)\pi_{(k)}.$$

If $c_{0,k} < k+1$, then $\frac{c_{0,k}-1}{k} <1$. Therefore,
the remaining diagram after the first step becomes
 
 \begin{center}$\be(L,x)- \ga_{0}\pi_{{\bf \bar{d}^0}} = \small \begin{tabular}{c|ccc}
$ $ & 0 & 1 & 2\\
\hline
$1$ & $\boxed{1-\frac{c_{0,k}-1}{k}}$ & - &  - \\
\vdots & \vdots & \vdots & \vdots \\
$k$ & $c_{0,k} $ & $\boxed{\frac{(k-1)c_{0,k}+1}{k}}$  & -  \\
$k+1$ & $c_{0,k+1}$ & $2c_{0,k+1}$ &  $\boxed{c_{0,k+1} }$ \\
\vdots & \vdots & \vdots & \vdots \\
$n$ & $c_{0,n}$ & $2c_{0,n}$ & $c_{0,n}$\\
\end{tabular}$
\end{center}

Then the next pure diagram is
\[\pi_{{\bf \bar{d}^1}}=\tiny \begin{tabular}{c|ccc}
$ $ & 0 & 1 & 2\\
\hline
$1$ & $\bf 2$ & $-$ & $-$\\
\vdots & \vdots & \vdots & \vdots \\
$k$ & $- $ & $\bf k+2$ & $-$\\
$k+1$ & $- $ & $-$ & $\bf k$
\end{tabular} \, \, \, \,\, \text{ for } \, \, \, \, {\bf \bar{d}^1}=(1,k+1,k+3).\]

The coefficient for $\pi_{{\bf \bar{d}^1}}$ is
\[\ga_{1}= \min\left\{\frac{1}{2}-\frac{c_{0,k}-1}{2k}, \frac{c_{0,k}(k-1)+1}{k(k+2)}, \frac{c_{0,k+1}}{k}\right\}.\]


{\bf Case I:} Let \[\ga_1 = \displaystyle{\frac{1}{2}-\frac{c_{0,k}-1}{2k}}.\] This is implied by the following inequalities;
\[\frac{k}{3}+1 \leq c_{0,k} \text{ and } 2c_{0,k+1}+c_{0,k} > k+1.\]
 
Thus the algorithm eliminates the entry $\be_{0,1}(L,x)$and the remaining diagram is $$\be(L,x)- \left(\displaystyle\frac{c_{0,k}-1}{k}\right)\pi_{(1,k+1,k+2)}-\left(\displaystyle{\frac{1}{2}-\frac{c_{0,k}-1}{2k}}\right)\pi_{(1,k+1,k+3)} \, \, $$
\begin{center}$ ={\small \begin{tabular}{c|ccc}
$ $ & 0 & 1 & 2\\
\hline
$k$ & $\boxed{c_{0,k}} $ & $\boxed{\frac{3c_{0,k}-k-3}{2}}$ & $-$\\
$k+1$ & $c_{0,k+1}$ & $2c_{0,k+1}$ & $\boxed{c_{0,k+1}-k(\frac{1}{2}-\frac{c_{0,k}-1}{2k})}$\\
\vdots & \vdots & \vdots & \vdots \\
$n$ & $c_{0,n}$ & $2c_{0,n}$ & $c_{0,n}$\\
\end{tabular}}$
\end{center}

Next if ${\bf \bar{d}^2} = (k, k+1, k+3)$ and 
then the coefficient of the pure diagram $\pi_{(k, k+1, k+3)}$ becomes 
\[
\ga_2 = \min\left\{\frac{c_{0,k}}{2}, \frac{3c_{0,k}-k-3}{6} , c_{0,k+1}-\frac{k+1 -c_{0,k}}{2}\right\}.
\] 
This creates two possible sub-cases and we observe the remaining diagrams for each case;

%
%
\underline{Case I.1:} If $\ga_2 =\displaystyle\frac{3(c_{0,k-1})-k}{6}$
which is a result of  $ \frac{k}{3} < c_{0,k+1}, $ then  we obtain

\begin{center}$\be(L,x)- \ga_0\pi_{{\bf \bar{d}^0}}-\ga_1\pi_{{\bf \bar{d}^1}}-\ga_2\pi_{{\bf \bar{d}^2}}= \small \begin{tabular}{c|ccc}
$ $ & 0 & 1 & 2\\
\hline
$k$ & $\frac{k}{3}+1$ & - & -\\
$k+1$ & $c_{0,k+1}$ & $2c_{0,k+1}$ & $c_{0,k+1}-\frac{k}{3}$ \\
\vdots & \vdots & \vdots & \vdots \\
$n$ & $c_{0,n}$ & $2c_{0,n}$ & $c_{0,n}$\\
\end{tabular}$
\end{center}
%
\underline{Case I.2:} If $\ga_2 = c_{0,k+1}-\frac{k+1 -c_{0,k}}{2}$ as $c_{0,k+1} < \frac{k}{3}$ , then 

$$\be(L,x)- \ga_0\pi_{{\bf \bar{d}^0}}-\ga_1\pi_{{\bf \bar{d}^1}}-\ga_2\pi_{{\bf \bar{d}^2}} = \small \begin{tabular}{c|ccc}
$ $ & 0 & 1 & 2\\
\hline
$k$ & $k+1-2c_{0,k} $ & $k-3c_{0,k+1}$ & -  \\
$k+1$ & $c_{0,k+1}$ & $2c_{0,k+1}$ &  -  \\
$k+2$ & $c_{0,k+2}$ & $2c_{0,k+2}$ &  $c_{0,k+2}$  \\
\vdots & \vdots & \vdots & \vdots \\
$n$ & $c_{0,n}$ & $2c_{0,n}$ & $c_{0,n}$\\
\end{tabular}$$



{\bf Case II:} Let 
\[
\ga_1 = \displaystyle{\frac{c_{0,k}(k-1)+1}{k(k+2)}},
\]
because 
\[\frac{k}{3} +1 > c_{0,k} \text{ and } (k-1)c_{0,k}+1 < (k+2)c_{0,k+1}.\] Then the algorithm gives 
$$\be(L,x)- \left(\displaystyle\frac{c_{0,k}-1}{k}\right)\pi_{(1,k+1,k+2)}-\left(\displaystyle{\frac{c_{0,k}(k-1)+1}{k(k+2)}}\right)\pi_{(1,k+1,k+3)}$$

\begin{center}$  =\small \begin{tabular}{c|ccc}
$ $ & 0 & 1 & 2\\
\hline
$1$ & $\boxed{1-\frac{k-3(c_{0,k}-1)}{k+2}}$ & - &  - \\
\vdots & \vdots & \vdots & \vdots \\
$k$ & $c_{0,k} $ & -  & -  \\
$k+1$ & $c_{0,k+1}$ & $\boxed{2c_{0,k+1}}$ &  $\boxed{c_{0,k+1} - \frac{(k-1)c_{0,k}+1}{k+2}}$ \\
\vdots & \vdots & \vdots & \vdots \\
$n$ & $c_{0,n}$ & $2c_{0,n}$ & $c_{0,n}$\\
\end{tabular}$
\end{center}

The next top degree sequence is ${\bf \bar{d}^2}=(1,k+2,k+3)$ and its coefficient is 
\[ \ga_2 = \min\Big\{ \frac{k-3(c_{0,k}-1)}{k+2}, \frac{2c_{0,k+1}}{k+2}, \frac{c_{0,k+1}}{k+1}-\frac{(k-1)c_{0,k}+1}{(k+1)(k+2)}\Big\}.\]

Therefore, it splits into two sub-cases :\\


\underline{Case II.1}: $\ga_2 = \displaystyle\frac{k-3(c_{0,k}-1}{k+2}$ as
$k+2 < c_{0,k+1}+2c_{0,k}.$ Then

\begin{center}$\be(L,x)- \sum\limits_{i=0}^{2}\ga_{i}\pi_{{\bf \bar{d}^i}} = \small \begin{tabular}{c|ccc}
$ $ & 0 & 1 & 2\\
\hline
$k$ & $c_{0,k} $ & - & -  \\
$k+1$ & $c_{0,k+1}$ & $2c_{0,k+1}-k+3(c_{0,k}-1)$ &  $c_{0,k+1}+2c_{0,k}-(k+2)$ \\
\vdots & \vdots & \vdots & \vdots \\
$n$ & $c_{0,n}$ & $2c_{0,n}$ & $c_{0,n}$\\
\end{tabular}$
\end{center}

The next degree sequence is ${\bf \bar{d}^3}=(k,k+2,k+3)$ and its coefficient is
 \[\ga_3 = \min \Big\{ c_{0,k}, \frac{2}{3}c_{0,k+1}-\frac{k}{3}+c_{0,k}-1, \frac{1}{2}c_{o,k+1} + c_{0,k}- \frac{k+2}{2} \Big\}.\]
If $c_{0,k} <  \frac{1}{2}c_{o,k+1} + c_{0,k}- \frac{k+2}{2}$, then it implies $k+2 < c_{0,k+1}$ which is not possible.
Thus the coefficient must be  \[\ga_3 = \displaystyle{\frac{c_{0,k+1}}{k+1}-\frac{(k-1)c_{0,k}}+1{(k+1)(k+2)}}.\] Then the remaining diagram looks like

\begin{center}$\be(L,x)- \sum\limits_{i=0}^{3}\ga_{i}\pi_{{\bf \bar{d}^i}} = \small \begin{tabular}{c|ccc}
$ $ & 0 & 1 & 2\\
\hline
$k$ & $\frac{k+2-c_{0,k+1}}{2}$ & - &  - \\
$k+1$ & $c_{0,k+1}$ & $\frac{c_{0,k+1}+k}{2}$ &  - \\
\vdots & \vdots & \vdots & \vdots \\
$n$ & $c_{0,n}$ & $2c_{0,n}$ & $c_{0,n}$\\
\end{tabular}$
\end{center}


\underline{Case II.2}: Note that if $G_{max}(L,x) = k+1$ and 
$\ga_2 = \displaystyle{\frac{c_{0,k+1}}{k+1}-\frac{(k-1)c_{0,k}+1}{(k+1)(k+2)}}$, then the algorithm contradicts the assumption $c_{0,k}\geq 1$.  So this case does not exist if the maximum degree is $k+1$. Suppose $G_{max}(L,x) > k+1$, so \[\ga_2 = \displaystyle{\frac{c_{0,k+1}}{k+1}-\frac{(k-1)c_{0,k}+1}{(k+1)(k+2)}} \, \text{ if } \,  2c_{0,k} + c_{0,k+1} < k+2.\]

Then the remaining diagram is

\begin{center}$\be(L,x)- \sum\limits_{i=0}^{2}\ga_{i}\pi_{{\bf \bar{d}^i}} = \small \begin{tabular}{c|ccc}
$ $ & 0 & 1 & 2\\
\hline
$1$ & $1-\frac{(2c_{0,k}+c_{0,k+1})-1}{k+1}$ & - &  - \\
\vdots & \vdots & \vdots & \vdots \\
$k$ & $c_{0,k} $ & -  & -  \\
$k+1$ & $c_{0,k+1}$ & $\frac{kc_{0,k+1}+(k-1)c_{0,k}+1}{k+1}$ &  - \\
$k+2$ & $c_{0,k+2}$ & $2c_{0,k+2}$ & $c_{0,k+2}$\\
\vdots & \vdots & \vdots & \vdots \\
$n$ & $c_{0,n}$ & $2c_{0,n}$ & $c_{0,n}$\\
\end{tabular}$
\end{center}


The top degree sequence of the remaining diagrams is $(1,k+2,k+4)$ and its coefficient is 
$$\ga_3 = \min\Big\{ \displaystyle\frac{1}{2}-\displaystyle\frac{2c_{0,k}+c_{0,k+1}-1}{2(k+1)}, \displaystyle\frac{kc_{0,k+1}+(k-1)c_{0,k}+1}{(k+1)(k+3)}, \displaystyle\frac{c_{0,k+2}}{k+1}\Big\}.$$ Next we observe each possible sub-cases for $\ga_3$.

\underline{Case II.2.a }: If $\ga_3 = \displaystyle\frac{1}{2}-\displaystyle\frac{2c_{0,k}+c_{0,k+1}-1}{2(k+1)}$ as a result of 
\[ k+4 < 3c_{0,k+1}+4c_{0,k} \text{ and } k+2 < 2c_{0,k}+c_{0,k+1}+2c_{0,k+2}. \]
Then we get the remaining diagram 
\begin{center}$\be(L,x)- \sum\limits_{i=0}^{3}\ga_{i}\pi_{{\bf \bar{d}^i}} = \small \begin{tabular}{c|ccc}
$ $ & 0 & 1 & 2\\
\hline
$k$ & $c_{0,k} $ & -  & -  \\
$k+1$ & $c_{0,k+1}$ & $\frac{3c_{0,k+1}+4c_{0,k}-(k+4)}{2}$ &  - \\
$k+2$ & $c_{0,k+2}$ & $2c_{0,k+2}$ & $c_{0,k+2}-\frac{c_{0,k+1+2c_{0,k}}-1}{2}-\frac{k+1}{2}$\\
\vdots & \vdots & \vdots & \vdots \\
$n$ & $c_{0,n}$ & $2c_{0,n}$ & $c_{0,n}$\\
\end{tabular}$
\end{center}

In this case, we would like to continue to decompose one more step. The coefficient of the pure diagram $\pi_{(k,k+2,k+4)}$ comes from 
\[\ga_4=\min\Big\{ c_{0,k}, \displaystyle\frac{3c_{0,k+1}+4c_{0,k}-(k+4)}{4}, c_{0,k+2}+\displaystyle\frac{c_{0,k+1}+2c_{0,k}-2-k}{2}\Big\}.\]

 (a1):\, If $\ga_4 = c_{o,k}$, that is,
\[\displaystyle\frac{k+4}{3} < c_{0,k+1} \text{ and } k+2 < c_{0,k+1}+2c_{0,k+2}\]

So the remaining diagram becomes

\begin{center}$\be(L,x)- \sum\limits_{i=0}^{4}\ga_{i}\pi_{{\bf \bar{d}^i}} = \small \begin{tabular}{c|ccc}
$ $ & 0 & 1 & 2\\
\hline
$k+1$ & $c_{0,k+1}$ & $\frac{3c_{0,k+1}-(k+4)}{2}$ &  - \\
$k+2$ & $c_{0,k+2}$ & $2c_{0,k+2}$ & $c_{0,k+2}-\frac{k+2-c_{0,k+1}}{2}$\\
\vdots & \vdots & \vdots & \vdots \\
$n$ & $c_{0,n}$ & $2c_{0,n}$ & $c_{0,n}$\\
\end{tabular}$
\end{center}

(a2):\, If $\ga_4 = \displaystyle\frac{3c_{0,k+1}+4c_{0,k}-(k+4)}{4}$, which is forced by
\[\displaystyle\frac{k+4}{3} > c_{0,k+1} \text{ and }  c_{0,k+1} +k < 4c_{0,k+2.}\]

Then 
\begin{center}$\be(L,x)- \sum\limits_{i=0}^{4}\ga_{i}\pi_{{\bf \bar{d}^i}} = \small \begin{tabular}{c|ccc}
$ $ & 0 & 1 & 2\\
\hline
$k$ & $\frac{k+4-3c_{0,k+1}}{4} $ & -  & -  \\
$k+1$ & $c_{0,k+1}$ & - &  - \\
$k+2$ & $c_{0,k+2}$ & $2c_{0,k+2}$ & $c_{0,k+2}-\frac{k+c_{0,k+1}}{4}$\\
\vdots & \vdots & \vdots & \vdots \\
$n$ & $c_{0,n}$ & $2c_{0,n}$ & $c_{0,n}$\\
\end{tabular}$
\end{center}

(a3):\, If $\ga_4 = c_{0,k+2}+\displaystyle\frac{c_{0,k+1}+2c_{0,k}-2-k}{2}$, which is caused by
\[ 2c_{0,k+2}+c_{0,k+1} < k+2 \text{ \, and \, } 4c_{0,k+2} < k + c_{0,k+1}.\]
Then the remaining diagram becomes

\begin{center}$\be(L,x)- \sum\limits_{i=0}^{4}\ga_{i}\pi_{{\bf \bar{d}^i}} = \small \begin{tabular}{c|ccc}
$ $ & 0 & 1 & 2\\
\hline
$k$ & $\frac{k+2-2c_{0,k+2}-c_{0,k+1}}{2}$ & -  & -  \\
$k+1$ & $c_{0,k+1}$ & $\frac{c_{0,k+1}-4c_{0,k+2}+k}{2}$ &  - \\
$k+2$ & $c_{0,k+2}$ & $2c_{0,k+2}$ & -\\
$k+3$ & $c_{0,k+3}$ & $2c_{0,k+3}$ & $c_{0,k+3}$\\
\vdots & \vdots & \vdots & \vdots \\
$n$ & $c_{0,n}$ & $2c_{0,n}$ & $c_{0,n}$\\
\end{tabular}$
\end{center}

\underline{Case II.2.b }: If $\ga_3 = \displaystyle\frac{kc_{0,k+1}+(k-1)c_{0,k}+1}{(k+1)(k+3)}$ because of the following inequalities
\[3c_{0,k+1}+4c_{0,k} < k+4 \text{ and }  kc_{0,k+1}+(k-1)c_{0,k}+1 < (k+3)c_{0,k+2}.\]
Thus, the remaining diagram is
$$\be(L,x)- \sum\limits_{i=0}^{3}\ga_{i}\pi_{{\bf \bar{d}^i}} $$
\begin{center}$= \small \begin{tabular}{c|ccc}
$ $ & 0 & 1 & 2\\
\hline
$1$ & $\frac{(k+1)(k+2)-2c_{0,k}(2k+1)+3c_{0,k+1}(k-1)}{(k+1)(k+2)}$ & - &  - \\
\vdots & \vdots & \vdots & \vdots \\
$k$ & $c_{0,k} $ & -  & -  \\
$k+1$ & $c_{0,k+1}$ & - &  - \\
$k+2$ & $c_{0,k+2}$ & $2c_{0,k+2}$ & $\frac{(k+3)c_{0,k+2}-kc_{0,k+1}-(k-1)c_{0,k}-1}{k+3}$\\
\vdots & \vdots & \vdots & \vdots \\
$n$ & $c_{0,n}$ & $2c_{0,n}$ & $c_{0,n}$\\
\end{tabular}$
\end{center}

\underline{Case II.2.c }: If $\ga_3 = \displaystyle\frac{1}{2}-\displaystyle\frac{2c_{0,k}+c_{0,k+1}-1}{2(k+1)}$ as
\[ 2c_{0,k+2}+c_{0,k+1}+2c_{0,k} < k+2 \text{ and }  (k+3)c_{0,k+2} < kc_{0,k+1}+(k-1)c_{0,k}+1. \]
Then $$\be(L,x)- \sum\limits_{i=0}^{2}\ga_{i}\pi_{{\bf \bar{d}^i}}$$
\begin{center}$ = \tiny \begin{tabular}{c|ccc}
$ $ & 0 & 1 & 2\\
\hline
$1$ & $\boxed{1- \frac{2c_{0,k}+c_{0,k+1}-1}{k+1} }- \frac{2c_{0,k+2}}{k+1}$ & - &  - \\
\vdots & \vdots & \vdots & \vdots \\
$k$ & $c_{0,k} $ & -  & -  \\
$k+1$ & $c_{0,k+1}$ & $\boxed{\frac{kc_{0,k+1}+(k-1)c_{0,k}+1}{k+1}} - \frac{(k+3)c_{0,k+2}}{k+1}$ &  - \\
$k+2$ & $c_{0,k+2}$ & $2c_{0,k+2}$ & $\boxed{c_{0,k+2}}-\frac{(k+1)c_{0,k+2}}{k+1} = 0$\\
$k+3$ & $c_{0,k+3}$ & $2c_{0,k+3}$ & $c_{0,k+3}$\\
\vdots & \vdots & \vdots & \vdots \\
$n$ & $c_{0,n}$ & $2c_{0,n}$ & $c_{0,n}$\\
\end{tabular}$
\end{center}
We notice that the pattern of the above remaining diagram is similar to the one in the beginning of the Case II.2. All possible top degree sequences $(*, *, k+4)$ in Case II.2.a and Case II.2.b will be replaced by $(*, *, k+5)$ in the next steps of Case II.2.c.\\
{\bf Case III:} If $G_{max}(L,x)= k+1$ then by Eliahou-Kervaire formula we get $c_{0,k}+c_{0,k+1} = k+1$ as $L$ is an Artinian lex segment ideal. Let $\ga_1 = \frac{c_{0,k+1}}{k}$, then it requires that $2c_{0,k+1}+c_{0,k} < k+1$ and so $c_{0,k+1} <1$ which is a contradiction. Thus this case does not exits if the maximum degree is $k+1$. Suppose that $G_{max}(L,x) >k+1$.  Thus  
\[\ga_1 = \displaystyle\frac{c_{0,k+1}}{k}\] since
\[2c_{0,k+1}+c_{0,k}<k+1 \text{ and } (k+2)c_{0,k+1}<(k-1)c_{0,k}+1.\]

Then the remaining diagram after subtracting two pure diagrams with corresponding coefficients is
$$\be(L,x)- \left(\displaystyle \frac{c_{0,k}-1}{k} \right)\pi_{(1,k+1,k+2)}-\left(\displaystyle\frac{c_{0,k+1}}{k}\right)\pi_{(1,k+1,k+3)} \, \, =$$

\begin{center}$ \small \begin{tabular}{c|ccc}
$ $ & 0 & 1 & 2\\
\hline
$1$ & $\boxed{\frac{k+1-(c_{0,k}+2c_{o,k+1})}{k}}$ & - &  - \\
\vdots & \vdots & \vdots & \vdots \\
$k$ & $c_{0,k} $ &  $\boxed{\frac{(k-1)c_{0,k}+1-(k+2)c_{0,k+1}}{k}}$ & -  \\
$k+1$ & $c_{0,k+1}$ & $2c_{0,k+1}$ &  - \\
$k+2$ & $c_{0,k+2}$ & $2c_{0,k+2}$ &  $\boxed{c_{0,k+2}}$ \\
\vdots & \vdots & \vdots & \vdots \\
$n$ & $c_{0,n}$ & $2c_{0,n}$ & $c_{0,n}$\\
\end{tabular}$
\end{center}

Then the next top degree sequence is ${\bf \bar{d}^2}=(1,k+1,k+4)$ and the coefficient 
\[\ga_2 = \min\left\{\displaystyle\frac{k+1-(c_{0,k}+2c_{o,k+1})}{3k},  \displaystyle\frac{(k-1)c_{0,k}+1-(k+2)c_{0,k+1}}{k(k+3)},  \displaystyle\frac{c_{0,k+2}}{k}\right\}.\]

\underline{Case III.1:} Let $\ga_2 = \displaystyle\frac{k+1-(c_{0,k}+2c_{o,k+1})}{3k}$. Then

\begin{center}$\be(L,x)- \sum\limits_{i=0}^{2}\ga_{i}\pi_{{\bf \bar{d}^i}} = \small \begin{tabular}{c|ccc}
$ $ & 0 & 1 & 2\\
\hline
$k$ & $c_{0,k} $ &  $\frac{(4(c_{0,k}-1)-(c_{0,k+1}+k)}{3}$ & -  \\
$k+1$ & $c_{0,k+1}$ & $2c_{0,k+1}$ &  - \\
$k+2$ & $c_{0,k+2}$ & $2c_{0,k+2}$ &  $c_{0,k+2}-\frac{(k+1)-(2c_{0,k+1}+c_{0,k})}{3}$ \\
\vdots & \vdots & \vdots & \vdots \\
$n$ & $c_{0,n}$ & $2c_{0,n}$ & $c_{0,n}$\\
\end{tabular}$
\end{center}

Thus ${\bf \bar{d}^3}=(k,k+1,k+4)$ and 
\[
\ga_3 = \min\left\{ \displaystyle\frac{c_{0,k}}{3}, \displaystyle\frac{(4(c_{0,k}-1)-(c_{0,k+1}+k)}{12}, c_{0,k+2}-\displaystyle\frac{(k+1)-(2c_{0,k+1}+c_{0,k})}{3}\right\}.
\]
We next observe one step more in the decomposition. So we get the following sub-cases since$\ga_3$ has two possible cases:


\underline{Case III.1.a:} When $\ga_3 = \displaystyle\frac{c_{0,k}}{3}$, the remaining diagram turns into

\begin{center}$\be(L,x)- \sum\limits_{i=0}^{3}\ga_{i}\pi_{{\bf \bar{d}^i}} = \small \begin{tabular}{c|ccc}
$ $ & 0 & 1 & 2\\
\hline
$k$ & $1+ \frac{c_{0,k}+k}{4} $ &  - & -  \\
$k+1$ & $c_{0,k+1}$ & $2c_{0,k+1}$ &  - \\
$k+2$ & $c_{0,k+2}$ & $2c_{0,k+2}$ &  $c_{0,k+2}+\frac{3c_{0,k+1}-k}{4}$ \\
\vdots & \vdots & \vdots & \vdots \\
$n$ & $c_{0,n}$ & $2c_{0,n}$ & $c_{0,n}$\\
\end{tabular}$
\end{center}


\underline{Case III.1.b:} When $\ga_3 = c_{0,k+2}-\displaystyle\frac{(k+1)-(2c_{0,k+1}+c_{0,k})}{3}$, the remaining diagram has the form

\begin{center}$\be(L,x)- \sum\limits_{i=0}^{3}\ga_{i}\pi_{{\bf \bar{d}^i}} = \small \begin{tabular}{c|ccc}
$ $ & 0 & 1 & 2\\
\hline
$k$ & $k+1 -(3c_{0,k+2}+2c_{0,k+1})$ &  $k-(4c_{0,k+2}+3c_{0,k+1})$ & -  \\
$k+1$ & $c_{0,k+1}$ & $2c_{0,k+1}$ &  - \\
$k+2$ & $c_{0,k+2}$ & $2c_{0,k+2}$ &  - \\
$k+3$ & $c_{0,k+3}$ & $2c_{0,k+3}$ &  $c_{0,k+3}$\\
\vdots & \vdots & \vdots & \vdots \\
$n$ & $c_{0,n}$ & $2c_{0,n}$ & $c_{0,n}$\\
\end{tabular}$
\end{center}

 
\underline{Case III.2:} Let $\ga_2 =\displaystyle\frac{(k-1)c_{0,k}+1-(k+2)c_{0,k+1}}{k(k+3)}$ and then 
 \begin{center}$\be(L,x)- \sum\limits_{i=0}^{2}\ga_{i}\pi_{{\bf \bar{d}^i}} = \small \begin{tabular}{c|ccc}
$ $ & 0 & 1 & 2\\
\hline
$1$ & $\frac{k+4-(c_{0,k}+c_{o,k+1})}{k+3}$ & - &  - \\
\vdots & \vdots & \vdots & \vdots \\
$k$ & $c_{0,k} $ &  - & -  \\
$k+1$ & $c_{0,k+1}$ & $2c_{0,k+1}$ &  - \\
$k+2$ & $c_{0,k+2}$ & $2c_{0,k+2}$ &  $c_{0,k+2}-\frac{(k-1)c_{0,k}+1-(k+2)c_{0,k+1}}{k+3}$ \\
\vdots & \vdots & \vdots & \vdots \\
$n$ & $c_{0,n}$ & $2c_{0,n}$ & $c_{0,n}$\\
\end{tabular}$
\end{center}
 
\underline{Case III.3:}  Let $\ga_2 = \displaystyle\frac{c_{0,k+2}}{k}$ and then 
 \begin{center}$\be(L,x)- \sum\limits_{i=0}^{2}\ga_{i}\pi_{{\bf \bar{d}^i}} = \small \begin{tabular}{c|ccc}
$ $ & 0 & 1 & 2\\
\hline
$1$ & $\frac{k+1-(c_{0,k}+2c_{o,k+1}+3c_{0,k+2})}{k}$ & - &  - \\
\vdots & \vdots & \vdots & \vdots \\
$k$ & $c_{0,k} $ &  $\frac{(k-1)c_{0,k}+1-(k+2)c_{0,k+1}-(k+3)c_{0,k+2}}{k}$ & -  \\
$k+1$ & $c_{0,k+1}$ & $2c_{0,k+1}$ &  - \\
$k+2$ & $c_{0,k+2}$ & $2c_{0,k+2}$ &  -\\
$k+3$ & $c_{0,k+3}$ & $2c_{0,k+3}$ &  $c_{0,k+3}$\\
\vdots & \vdots & \vdots & \vdots \\
$n$ & $c_{0,n}$ & $2c_{0,n}$ & $c_{0,n}$\\
\end{tabular}$
\end{center}
Hence we could pause the decomposing process since we have observed enough part of BS decomposition of the Betti diagram $(L,x)$ so that we can compare each possible remaining diagram with the ones will be obtained from BS decompositon of the Betti diagram of $L$.

We examine the BS decomposition of the lex ideal $L$.
First of all, as a trivial case, we notice that if $G_{min}(L)=1$, then the statement is vacuously true since $L =(L,x)$. 

We next induct on the difference of the initial degrees $G_{min}(J)-G_{min}(\mfa)\geq 1$.\\
{\bf\textit{Base Step:}} In this step, we show that the statement is true for the lex ideals $L=x\mfa + J$ when $G_{min}(J)-G_{min}(\mfa)=1$. 
That is, if $G_{min}(J)=k\geq2$ then $G_{min}(\mfa)=k-1$. So $\mfa = (x,y,z)^{k-1}$ since $L$ is a lex ideal.
%
%
To this end, we modify the Betti diagram of $L$ in the table \ref{Betti L} to this particular lex ideal $L$;
\begin{center}
$ \be(L) = \begin{tabular}{c|ccc}
  $ $ & 0 & 1 & 2\\
  \hline
   $k$ & $\frac{k(k+1)}{2}+c_{0,k}$ & $(k-1)(k+1)+2c_{0,k}-1$ & $\frac{k(k-1)}{2}+c_{0,k}-1$\\
  $k+1$ & $c_{0,k+1} $ & $2c_{0,k+1}$ & $c_{0,k+1}$\\
\vdots & \vdots & \vdots & \vdots \\
$n$ & $c_{0,n}$ & $2c_{0,n}$ & $c_{0,n}$
  \end{tabular}$
 \end{center}
Obviously the first top degree sequence and its coefficient are ${\bf d^0}=(k,k+1,k+2)$ and $\al_0 = \displaystyle\frac{k(k-1)}{2}+c_{0,k}-1$, respectively.
 Then ${\bf d^1}=(k,k+1,k+3)$ becomes the next degree sequence with the coefficient 
 \[\al_1=\min\Big\{\displaystyle\frac{k}{3}, c_{0,k+1}\Big\}.\]
Now, analyze each possible cases for the next step in the decomposition. \\
(i)\, If $\al_1=\displaystyle\frac{k}{3} < c_{0,k+1}$ then the remaining diagram of of $\be(L)$ after three steps becomes
  \begin{center}$ \be(L) - \sum\limits_{i=0}^{1}\al_i\pi_{{\bf d^i}} = \begin{tabular}{c|ccc}
  $ $ & 0 & 1 & 2\\
  \hline
  $k$ & $\frac{k}{3}+1$ & - & -\\
  $k+1$ & $c_{0,k+1} $ & $2c_{0,k+1}$ & $c_{0,k+1}-\frac{k}{3}$ \\
  \vdots & \vdots & \vdots & \vdots \\
 $n$ & $c_{0,n}$ & $2c_{0,n}$ & $c_{0,n}$\\
  \end{tabular} $
  \end{center}  
 If $ c_{0,k} > \displaystyle\frac{k}{3}+1 \text{ and } 2c_{0,k+1}+c_{0,k} >k+1 $ as in the Case I of $(L,x)$ and since $\al_1=\displaystyle\frac{k}{3} < c_{0,k+1}$ this remaining diagrams matches with the one in the Case I.1,
  $$ \be(L) - \sum\limits_{i=0}^{1}\al_i\pi_{{\bf d^i}} = \be(L,x) - \sum\limits_{i=0}^{2}\ga_i\pi_{{\bf \bar{d}^i}}.$$  
 Hence we are done as the BS decompositions for $L$ and $(L,x)$ have the same  remaining diagram.

Otherwise the BS decomposition of $(L,x)$ results in either as in {\bf Case II} or  {\bf Case III}, we keep decomposing the Betti diagram of $L$. 
 
 The next degree sequence is ${\bf d^2}=(k,k+2,k+3)$ and the coefficient of $\pi_{(k,k+2,k+3)}$ is 
  \[\al_2 = \min\left\{\displaystyle\frac{k}{3}+1, \displaystyle\frac{2c_{0,k+1}}{3}, \displaystyle\frac{c_{0,k+1}}{2}-\frac{k}{6}\right\}.\]
As $ \frac{k}{3} +1 < \frac{c_{0,k+1}}{2}-\frac{k}{6}$ implies $k+2 < c_{0,k+1}$ which is not possible. So   \[\al_2 = \frac{c_{0,k+1}}{2}-\frac{k}{6}. \]
 Therefore the remaining diagram is
  
  \begin{center}$ \be(L) - \sum\limits_{i=0}^{2}\al_i\pi_{{\bf d^i}} = \begin{tabular}{c|ccc}
  $ $ & 0 & 1 & 2\\
  \hline
    $k$ & $\frac{k+2-c_{0,k+1}}{2}$ & - & -\\
  $k+1$ & $c_{0,k+1} $ & $\frac{c_{0,k+1}+k}{2}$ & -\\
   $k+2$ & $c_{0,k+2}$ & $2c_{0,k+2}$ & $c_{0,k+2}$\\
  \vdots & \vdots & \vdots & \vdots \\
 $n$ & $c_{0,n}$ & $2c_{0,n}$ & $c_{0,n}$\\
  \end{tabular}$ 
  \end{center}  
  
  If we have Case II for $(L,x)$, which means 
  \[\frac{k}{3} +1 > c_{0,k} \text{ and } (k-1)c_{0,k} +1 < (k+2)c_{0,k+1}\] 
  and also  $ k+2 < c_{0,k+1}+2c_{0,k},$ so
  
 \[\be(L) - \sum\limits_{i=0}^{2}\al_i\pi_{{\bf d^i}} = \be(L,x) - \sum\limits_{i=0}^{3}\gamma_i\pi_{e^i}, \text{ as in Case II.1.} \]
 
 If $k+2 > c_{0,k+1}+2c_{0,k}$, which corresponds to Case II.2 for $(L,x)$, again we move on to the next step. So ${\bf d^3}=(k,k+2,k+4)$ and then 
 \[\al_3 = \min\Big\{ \displaystyle\frac{k+2-c_{0,k+1}}{2}, \displaystyle\frac{c_{0,k+1}+k}{4}, c_{0,k+2}\Big\}.\] This splits into three cases;
 \begin{itemize}
 \item 
 $\be(L) - \sum\limits_{i=0}^{2}\al_i\pi_{{\bf d^i}}-  \frac{k+2-c_{0,k+1}}{2}\pi_{d^3} = 
 \be(L,x) - \sum\limits_{i=0}^{4}\gamma_i\pi_{e^i}, \text{ as in Case II.2.a1.}$.
 
 \item  $\be(L) - \sum\limits_{i=0}^{2}\al_i\pi_{{\bf d^i}}-  \frac{c_{0,k+1}+k}{4}\pi_{d^3} = 
 \be(L,x) - \sum\limits_{i=0}^{4}\gamma_i\pi_{e^i}, \text{ as in Case II.2.a2.}$.
 
  \item  $\be(L) - \sum\limits_{i=0}^{2}\al_i\pi_{{\bf d^i}}-  c_{0,k+2}\pi_{d^3} = 
 \be(L,x) - \sum\limits_{i=0}^{4}\gamma_i\pi_{e^i}, \text{ as in Case II.2.a3.}$.
 
 \end{itemize}
 
We might need to recall that Case II.2.a for $(L,x)$ requires $k+4 < 3c_{0,k+1}+4c_{0,k}$  and $k+2 < 2c_{0,k}+c_{0,k+1} + 2c_{0,k+2}$. If $k+4 > 3c_{0,k+1}+4c_{0,k}$ as in Case II.2.b, it contradicts the first assumption $\frac{k}{3} < c_{0,k+1}$.  As Case II.2.c is a case similar to Case II.2, continuing the algorithm will leads us again the same remaining diagram as we have Case II.2.a.

Thus, if either {\bf Case I} or {\bf Case II} holds for the decomposition of $(L,x)$, we always end up with the same remaining diagrams, even with the ones having size $3$.

If the decomposition for $(L,x)$ turns into the {\bf Case III}, we get 
\[ 2c_{0,k+1} + c_{0,k} < k+1 \text{\, and \, } (k+2)c_{0,k+1} < (k-1)c_{0,k} +1.\]
On the other hand, by the assumption $\frac{k}{3} < c_{0,k+1}$, the former and the latter inequalities imply $c_{0,k} < \frac{k}{3}+1$ and $\frac{k}{3} + 1 < c_{0,k}$, respectively and we have a contradiction. Thus, this situation cannot come true at all.\\
(ii)\, If $\al_1 = c_{0,k+1} < \frac{k}{3}$, and if the BS decomposition of $(L,x)$ follows {\bf Case I} then we have
  \begin{center}$\be(L) - \sum\limits_{i=0}^{1}\al_i\pi_{{\bf d^i}} = \begin{tabular}{c|ccc}
  $ $ & 0 & 1 & 2\\
  \hline
  $k$ & $k+1-2c_{0,k+1}$ & $k-3c_{0,k+1}$ & - \\
  $k+1$ & $c_{0,k+1} $ & $2c_{0,k+1}$ & -\\
  \vdots & \vdots & \vdots & \vdots \\
 $n$ & $c_{0,n}$ & $2c_{0,n}$ & $c_{0,n}$
  \end{tabular} = \be(L,x) - \sum\limits_{i=0}^{2}\ga_i\pi_{{\bf \bar{d}^i}}.$ 
  \end{center}
  
If the decomposition of $(L,x)$ follows different path, the next degree sequence in the decomposition for $L$ becomes ${\bf d} = (k,k+1,k+4)$ and the coefficient  is
\[\al_2 = \min\left\{ \displaystyle\frac{k+1-2c_{0,k+1}}{3},  \displaystyle\frac{k-3c_{0,k+1}}{4}, c_{0,k+2}\right\}.\]
We move on the next sub-cases for each possibility for the coefficient $\al_2$.
\begin{enumerate}
\item[(ii.a)]\, If $\al_2 = \displaystyle\frac{k-3c_{0,k+1}}{4}$ thanks to the inequality
        \[ k < 3c_{0,k+1} + 4c_{0,k+2},\] 
         we have
 
      \begin{center}$ \be(L) - \sum\limits_{i=0}^{2}\al_i\pi_{{\bf d^i}} = \begin{tabular}{c|ccc}
  $ $ & 0 & 1 & 2\\
  \hline
    $k$ & $\frac{k+4+c_{0,k+1}}{4}$ & - & -\\
  $k+1$ & $c_{0,k+1} $ & $2c_{0,k+1}$ & -\\
   $k+2$ & $c_{0,k+2}$ & $2c_{0,k+2}$ & $\frac{4c_{0,k+2}+3c_{0,k+1}-k}{4}$\\
  \vdots & \vdots & \vdots & \vdots \\
 $n$ & $c_{0,n}$ & $2c_{0,n}$ & $c_{0,n}$\\
  \end{tabular}$ 
  \end{center}  

 If the algorithm follows through as in the Case III for the decomposition of $(L,x)$, the relation $k < 3c_{0,k+1} + 4c_{0,k+2}$ yields the same the remaining diagram. Hence
  
  \[\be(L) - \sum\limits_{i=0}^{2}\al_i\pi_{{\bf d^i}}  =  \be(L,x)- \sum\limits_{i=0}^{3}\ga_{i}\pi_{{\bf \bar{d}^i}}.\]
  
 If the Betti diagram of $(L,x)$ is decomposed as in the {\bf Case II}, we keep decomposing the Betti diagram of $L$ and get three sub-cases for the next coefficient
 
$$\al_3 = \min\Big\{ \displaystyle\frac{k+4+c_{0,k+1}}{4},  c_{0,k+1}, \displaystyle\frac{4c_{0,k+2}+3c_{0,k+1}-k}{4}\}.$$

 If $\al_3 =  \displaystyle\frac{k+4+c_{0,k+1}}{4}$, that is, 
 \[ k+4 < 3c_{0,k+1} \text{\, and\, } k+2 < 2c_{0,k+2}+c_{0,k+1}\]
 
 then
   \begin{center}$ \be(L) - \sum\limits_{i=0}^{3}\al_i\pi_{{\bf d^i}} = \begin{tabular}{c|ccc}
  $ $ & 0 & 1 & 2\\
  \hline
  $k+1$ & $c_{0,k+1} $ & $\frac{3c_{0,k+1}-k-4}{2}$ & -\\
   $k+2$ & $c_{0,k+2}$ & $2c_{0,k+2}$ & $c_{0,k+2-}\frac{k+2c_{0,k+1}}{2}$\\
  \vdots & \vdots & \vdots & \vdots \\
 $n$ & $c_{0,n}$ & $2c_{0,n}$ & $c_{0,n}$\\
  \end{tabular}$
  \end{center}  
 which is equal to  $\be(L,x)- \sum\limits_{i=0}^{4}\ga_{i}\pi_{{\bf \bar{d}^i}}$ since the same relations are also required for the Case II.2.a.1 for the decomposition of $(L,x)$.
  
 If $\al_3 = c_{0,k+1}$, that is, the relations are
 \[ 3c_{0,k+1} <k+4  \text{\, and\, } k < 4c_{0,k+2}-c_{0,k+1},\]
 
 then 
  
      \begin{center}$ \be(L) - \sum\limits_{i=0}^{3}\al_i\pi_{{\bf d^i}} = \begin{tabular}{c|ccc}
  $ $ & 0 & 1 & 2\\
  \hline
    $k$ & $\frac{k+4-3c_{0,k+1}}{4}$ & - & -\\
  $k+1$ & $c_{0,k+1} $ & - & -\\
   $k+2$ & $c_{0,k+2}$ & $2c_{0,k+2}$ & $\frac{4c_{0,k+2}-c_{0,k+1}-k}{4}$\\
  \vdots & \vdots & \vdots & \vdots \\
 $n$ & $c_{0,n}$ & $2c_{0,n}$ & $c_{0,n}$\\
  \end{tabular}$ 
  \end{center}  
and it is the same remaining diagram as in Case II.2.a.2 for $(L,x)$ because of the same required relations.
 
If $\al_3 =  \displaystyle\frac{4c_{0,k+2}+3c_{0,k+1}-k}{4}$, which is a consequence of the following inequalities
 \[ k+2 < 2c_{0,k+2}+c_{0,k+1}   \text{\, and\, } 4c_{0,k+2}-c_{0,k+1} < k.\] 
 Then 
  \begin{center}$ \be(L) - \sum\limits_{i=0}^{3}\al_i\pi_{{\bf d^i}} = \begin{tabular}{c|ccc}
  $ $ & 0 & 1 & 2\\
  \hline
    $k$ & $\frac{k+2-(2c_{0,k+2}+c_{0,k+1})}{2}$ & - & -\\
  $k+1$ & $c_{0,k+1} $ & $\frac{c_{0,k+1}-4c_{0,k+2}+k}{2}$& -\\
   $k+2$ & $c_{0,k+2}$ & $2c_{0,k+2}$ & -\\
    $k+3$ & $c_{0,k+3}$ & $2c_{0,k+3}$ & $c_{0,k+3}$\\
  \vdots & \vdots & \vdots & \vdots \\
 $n$ & $c_{0,n}$ & $2c_{0,n}$ & $c_{0,n}$\\
  \end{tabular}$ 
  \end{center}  
  $$= \be(L,x) - \sum\limits_{i=0}^{4}\ga_i\pi_{{\bf \bar{d}^i}}\text{ \, as in Case II.2.a.3 for $(L,x)$.}$$
 
\item[(ii.b)]\, If $\al_2 = c_{0,k+2}$ then the remaining diagram  becomes
 
      \begin{center}$ \be(L) - \sum\limits_{i=0}^{2}\al_i\pi_{{\bf d^i}} = \begin{tabular}{c|ccc}
  $ $ & 0 & 1 & 2\\
  \hline
    $k$ & $k+1-2c_{0,k+1}-3c_{0,k+2}$ & $k-3c_{0,k+1}-4c_{0,k+2}$ & -\\
  $k+1$ & $c_{0,k+1} $ & $2c_{0,k+1}$ & -\\
   $k+2$ & $c_{0,k+2}$ & $2c_{0,k+2}$ & - \\
    $k+3$ & $c_{0,k+3}$ & $2c_{0,k+3}$ & $c_{0,k+3}$ \\
  \vdots & \vdots & \vdots & \vdots \\
 $n$ & $c_{0,n}$ & $2c_{0,n}$ & $c_{0,n}$\\
  \end{tabular}$ 
  \end{center}  
  The above diagram matches with remaining diagrams as in the Case III.1.b for the decomposition of $(L,x)$ if $(L,x)$ decomposes as in {\bf Case III}. Now suppose that BS decomposition of $\be(L,x)$ after several steps ends up as in the {\bf Case II}. We want to show that same remaining diagram occurs for $L$ as well. If $G_{max}$ is not $k+3$, we may have another case which gives us similar pattern like above diagram such as
     \begin{center}$ \be(L) - \sum\limits_{i=0}^{2}\al_i\pi_{{\bf d^i}} -c_{0,k+3}\pi_{(k,k+1,k+5)}= \begin{tabular}{c|ccc}
  $ $ & 0 & 1 & 2\\
  \hline
    $k$ & $k+1-2c_{0,k+1}-3c_{0,k+2}-4c_{0,k+3}$ & $k-3c_{0,k+1}-4c_{0,k+2}-5c_{0,k+3}$ & -\\
  $k+1$ & $c_{0,k+1} $ & $2c_{0,k+1}$ & -\\
   $k+2$ & $c_{0,k+2}$ & $2c_{0,k+2}$ & - \\
    $k+3$ & $c_{0,k+3}$ & $2c_{0,k+3}$ & - \\
        $k+3$ & $c_{0,k+4}$ & $2c_{0,k+4}$ & $c_{0,k+4}$ \\
  \vdots & \vdots & \vdots & \vdots \\
 $n$ & $c_{0,n}$ & $2c_{0,n}$ & $c_{0,n}$\\
  \end{tabular}$ 
  \end{center}  
  
  Thus, assume the maximum degree is $k+3$. Therefore the next coefficient is 
  $\al_3 = \displaystyle\frac{k-(3c_{0,k+1}+4c_{0,k+2})}{5}$ with the pure diagram $\pi_{k,k+1,k+5}$.  Then the remaining diagram turns into

  \begin{center}$\be(L)- \sum\limits_{i=0}^{3}\al_{i}\pi_{{\bf d^i}} = \small \begin{tabular}{c|ccc}
$ $ & 0 & 1 & 2\\
\hline
$k$ & $\frac{k+5+2c_{0,k+2}+c_{0,k+1}}{2}$ & -  & -  \\
$k+1$ & $c_{0,k+1}$ & $2c_{0,k+1}$ &  - \\
$k+2$ & $c_{0,k+2}$ & $2c_{0,k+2}$ & -\\
$k+3$ & $c_{0,k+3}$ & $2c_{0,k+3}$ & $\frac{5c_{0,k+3}+4c_{0,k+2}+3c_{0,k+1}-k}{5}$
\end{tabular}$
\end{center}
  
  Also, we have assumed that the Betti diagram of $(L,x)$ decomposes as in {\bf Case II}.  We notice that the entries of the remaining diagram is closely related to the one in Case II.2.a3.
  
  \begin{center}$\be(L)- \sum\limits_{i=0}^{3}\al_{i}\pi_{{\bf d^i}} = \small \begin{tabular}{c|ccc}
$ $ & 0 & 1 & 2\\
\hline
$k$ & $\frac{k+2-2c_{0,k+2}-c_{0,k+1}}{2} + 3D$ & -  & -  \\
$k+1$ & $c_{0,k+1}$ & $\frac{c_{0,k+1}-4c_{0,k+2}+k}{2}+5D$ &  - \\
$k+2$ & $c_{0,k+2}$ & $2c_{0,k+2}$ & -\\
$k+3$ & $c_{0,k+3}$ & $2c_{0,k+3}$ & $c_{0,k+3}+2D$\\
\end{tabular} $
\end{center}
where $D = \frac{4c_{0,k+2}+3c_{0,k+1}-k}{10}$. Thus the next coefficient is
  \[ \al_4 = \min\left\{ \frac{k+2-2c_{0,k+2}-c_{0,k+1}}{6} + D, \frac{c_{0,k+1}-4c_{0,k+2}+k}{10}+D, \frac{c_{0,k+3}}{2}+D\right\}\]
  which follows the same paths for the decomposition of $(L,x)$ in Case II.2.a3.
  
\end{enumerate}

BS decompositions of $\be(L)$ and $\be(L,x)$ always come up with the same remaining diagrams after several steps of the decomposition. Moreover, we observe that they share not only the length two pure diagrams and their coefficients but also some length three pure diagrams.
 
Hence, in every possible case we end up with the same remaining diagrams for both $\be(L)$ and $\be(L,x)$. In other words, the BS decompositions of $L$ and $(L,x)$ coincide precisely after several steps of the algorithm. Thus the statement holds for the case of $G_{min}(J)-G_{min}(\mfa)=1$.\\
{\bf\textit{Induction Hypothesis:}} Let the statement be true for all lex ideals
$L = x\mfa +J$ with $G_{min}(J)-G_{min}(\mfa) = N \geq 1$.  
We need to show that it is also true for the lex ideals satisfying $G_{min}(J)-G_{min}(\mfa) = N+1$. To this end, we identify the initial degrees of $J$ and $\mfa$ by $G_{min}(J) = k$ and $G_{min}(\mfa)=m$. 

Suppose that $L = x\mfa + J$ is a lex ideal such that $k-m = N+1$. So $k-m = N+1 \geq 2.$ We prove this into two cases.

\underline{\textit{Case A:}} If $y^m\notin \mfa$. Since $\mfa$ is a lex ideal, we write $\mfa = x\mfb + I$. 
Then we notice that $G_{min}(I)\neq k$. Otherwise it contradicts to $y^k \in G(J)$. Thus $k \gneq G_{min}(I) \gneq m$ as $y^m \notin \mfa$. 
Define $\tilde{\mfa} \subset \mfa$ as the ideal containing all monomials of $\mfa$ of degree greater or equal to $m+1$ and note that $\tilde{\mfa}$ is also a lex ideal with $G_{min}(\tilde{\mfa}) = m+1$.
Define $\tilde{L} = x\tilde{\mfa} + J$ and it is a lex ideal with $G_{min}(J) - G_{min}(\tilde{\mfa}) = k - (m+1) = k-m-1 = N+1-1 = N$.
Therefore, by the induction hypothesis, $\be(\tilde{L})$ and $\be(L,x)$ have the same ends in their BS decompositions, i.e. same pure diagrams of length 
less than $3$ with same coefficients,
$$\be(\tilde{L}) - \sum\limits_{\tilde{{\bf d}}^i, \ \text{all length $3$ degree sequences}}\tilde{\al_i}\pi_{\tilde{{\bf d}}^i} =
\be(L,x) - \sum\limits_{{\bf d^i}, \ \text{all length $3$ degree sequences}}\ga_i\pi_{{\bf d}^i}.$$
On the other hand, $\tilde{\mfa}$ can be decomposed as $\tilde{\mfa} = x\tilde{\mfb} + \tilde{I}$. It is easy to see that $\tilde{I} = I$ as 
$y^m\notin \mfa$ and $G_{min}(\tilde{\mfb})=m$.
Clearly, $G_{min}(I)-G_{min}(\tilde{\mfb}) \leq (k-1)-m = N$. Again by the induction hypothesis BS decompositions of $\be(\tilde{\mfa})$ and  $\be(I,x)$ have the same ends.

Recall that $\mfa = x\mfb + I$, so we get
$G_{min}(I)-G_{min}(\mfb)\leq (k-1)-(m-1)= k-m = N+1$. 

Suppose that $G_{min}(I)-G_{min}(\mfb) < N+1$, then thanks to the induction hypothesis, BS decompositions of $\mfa$
and  $(I,x)$ have the same ends, so do $\mfa$ and $\tilde{\mfa}$. That is,
\[D:=\be(\mfa) - \sum \text{All length $3$ pure diagrams} = \be(\tilde{\mfa}) - \sum\text{All length $3$ pure diagrams}.\]
Also using the Theorem \ref{thm1} BS decompositions for the ideals $L$ and $\tilde{L}$ can be observed as;
$$\be(L) = \sum\limits_{{\bf d^i} \ \mbox{with\ } l({\bf d^i}) =3}\al_i\pi_{{\bf d^i}}  \ \ +  \ \ 
\begin{tabular}{c|ccc}
  & 0 & 1 & 2\\
\hline
$2$&    & \mbox{Remaining}  & \\  
\vdots &  & \mbox{diagram, $\mathbf{D}$} &  \\
\hdashline
$k$ &   &   &      \\
\vdots   &   & $\be_{i,i+j}(L,x),\ i\geq k$  &  \\
         &   &             & 
\end{tabular} $$
and
$$\be(\tilde{L}) = \sum\limits_{{\bf \tilde{d}}^i \ \mbox{with\ } l({\bf \tilde{d}}^i) =3}\tilde{\al}_i\pi_{{\bf \tilde{d}}^i}  \ \ +  \ \ 
\begin{tabular}{c|ccc}
  & 0 & 1 & 2\\
\hline
$2$&    & \mbox{Remaining}  & \\  
\vdots &  & \mbox{diagram, $\mathbf{D}$} &  \\
\hdashline
$k$ &   &   &      \\
\vdots   &   & $\be_{i,i+j}(L,x),\ i\geq k$  &  \\
         &   &             & 
\end{tabular} $$
This shows that $\be(L)$ and $\be(\tilde{L})$ have same ends and we also know that $\be(\tilde{L})$ and $\be(L,x)$ have the same ends. Hence the statement is true.

It remains to study when $G_{min}(I)-G_{min}(\mfb)=(k-1)-(m-1)= k-m = N+1$, which means $G_{min}(I)=k-1$. It follows from $G_{min}(J)=k$ that $I=(y,z)^{k-1}$. 
Then $\mfa= x\mfb+I$ and $\mfb = x\overline{\mfb}+\overline{I}$ where 
$G_{min}(\overline{I})-G_{min}(\overline{\mfb}) \geq (k-2)-(m-2) \geq N+1$. If it is a strict inequality then applying the same process as we have done for $L$ can be applied to $\mfa$ 
to prove the statement. If there is an equality, we end up with the same situation.
$L=x\mfa+J$ where $G_{min}(J)=k, \, G_{min}(\mfa)=m$ and $k-m =N+1$, and $\mfa=x\mfb+I$ where $I=(y,z)^{k-1}, G_{min}(\mfb)=m-1$, and $\mfb = x\overline{\mfb}+\overline{I}$ where $\overline{I}=(y,z)^{k-2}, G_{min}(\overline{\mfb})=m-2$.
We repeat this until we get
$$\mfc =x(x,y,z^{t-1})+ K \ \ \text{where}\ K=(y,z)^s, s=k-m+1, 1\leq t \leq k-m.$$  
For this form of the lex ideal, one can check the BS decomposition of the ideal $\mfc$.

$$\begin{matrix}\be(\mfc) & = & \begin{tabular}{c|ccc}
  $ $ & 0 & 1 & 2\\
  \hline
  $2$ & $2$ & $1$ & -\\
  \vdots & \vdots & \vdots & \vdots \\
  $t$ & $1$ & $2$ & $1$ \\
  \vdots & \vdots & \vdots & \vdots \\
 $s$ & $s+1$ & $2s+1$ & $s$\\
  \end{tabular}& & & & \\ \\
  & = &  \frac{1}{t}\begin{bmatrix}
  2: & t-1 & t & -\\
  t: & - & - & 1
  \end{bmatrix} & + &
  \frac{1}{t}\begin{bmatrix}
  2: & 1 & - & -\\
  t: & - & t-1 & t
  \end{bmatrix} \\ \\
  & + &
 \frac{1}{s}\begin{bmatrix}
  2: & s-t+1 & - & -\\
  t: & - & s & - \\
  s: & - & - & t-1
  \end{bmatrix}
   & + &  \frac{t-1}{s}\begin{bmatrix}
  2: & 1 & - & -\\
  s: & - & s & s-1
  \end{bmatrix}\\ \\
  & + & 
  1\begin{bmatrix}
  t: & 1 & - & -\\
  s: & - & s-t+2 & s-t+1
  \end{bmatrix} & + & 
   \underbrace{s\begin{bmatrix}
  s: & 1 & 1\\
  \end{bmatrix}+
  1\begin{bmatrix}
  s: & 1
  \end{bmatrix}}_{\text{same end as in the decomposition of} \ (L,x)}
  \end{matrix}
  $$ 
Therefore the statement is true for the ideal $\mfc$. So we may assume that, without loss of generality, $\mfa$ is in the form of $\mfc$, 
i.e., $G_{min}(I)-G_{min}(\mfa)< N+1$. This observation completes the proof for Case I.\\

\underline{\textit{Case B:}} Let $y^m \in \mfa$.
\begin{enumerate}
\item If $m \notin 1$; we write $\mfa = x\mfb+I$ and $G_{min}(I)=m>1$. This implies that $\mfb=(x,y,z)^{m-1}$. Consider $\tilde{\mfa}=(\mfa,x)=x(1)+I$. 
Clearly $\tilde{\mfa}=(\tilde{\mfa},x)$, so the statement is trivially true for the ideal $\tilde{\mfa}$.
Moreover, $G_{min}(I)-G_{min}(\mfa)=m-(m-1)=1$. By the base case, the decompositions of $\be(\mfa)$ and $\be(I,x)$ have the same ends. Hence,  
\begin{align*} \be(\mfa) - \sum \text{all length $3$ pure diagrams} & = &
\be(I,x) - \sum \text{all length $3$ pure diagrams}\\ 
& = & \be(\tilde{\mfa}) - \sum\text{all length $3$ pure diagrams}.
\end{align*}
Similar to the {\bf Case 1}, consider the lex ideal $\tilde{L}=x\tilde{\mfa}+J$ and $y^{G_{min}(\tilde{\mfa})}= y \notin \tilde{\mfa}$. Thus by the result of the {\bf Case 1}, 
the statement is true for $\tilde{L}$. We do exactly the same trick as in {\bf Case 1} to show 
 that $\be(L)$ and $\be(\tilde{L})$ have the same ends and it follows that the statement holds for $L$.
 \item If $m=1$; that is, $\mfa=(x,y,z^t)$ where $1\leq t \leq k-1$. 
 In the {\bf Case 1} we have already shown that the BS decomposition of the $\be(L)$ satisfy the statement if $L= x(x,y,z^t)+J$ where $J=(y,z)^k$.
 Nevertheless, for more general stable ideal $J\subset \mathbf{k}[y,z]$ we have already assumed that $L$ cannot be in that form in the statement. 
\end{enumerate}   

\begin{conjecture} 
The statement of Theorem \ref{thm2} holds for all Artinian lex-ideals in $\mathbf{k}[x,y,z]$. 
\end{conjecture}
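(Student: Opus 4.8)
The plan is to remove the hypothesis that makes Theorem \ref{thm2} conditional, that is, to establish the statement for the one excluded family: $L = x(x,y,z^t)+J$ with $1 < t < k-1$ and $J$ a lex ideal of $\mathbf{k}[y,z]$ with $G_{min}(J) = k$ and $J \neq (y,z)^{k}$. Tracing Cases A and B in the proof of Theorem \ref{thm2}, every Artinian lex ideal not of this shape is already reduced --- via the inductive step and Theorem \ref{thm1} --- either to the base step or to the explicit ideal $\mfc = x(x,y,z^{t-1}) + (y,z)^{s}$ whose decomposition was computed there; the ideals $x(x,y,z^t)+J$ with $J$ a \emph{general} stable ideal of $\mathbf{k}[y,z]$ are precisely what the reduction cannot absorb, so it suffices to treat them directly.

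The first step is to write down the two Betti diagrams. Since $\mfa = (x,y,z^t)$ is a complete intersection, $\be(\mfa)$ is its Koszul diagram, with nonzero entries only in rows $1$ and $t$; so by Lemma \ref{lemma for ses} and Table \ref{Betti L}, and because $1 < t < k-1$ forces $G_{min}(J) = k \geq t+2$ and hence prevents the Betti diagrams of $x\mfa$ and $J$ from overlapping, $\be(L)$ consists of a ``Koszul block'' in rows $2$ and $t+1$, a gap, and then a ``$J$-block'' in rows $k, k+1, \dots, n$ that is \emph{literally identical} to the corresponding block of $\be(L,x) = \be(J,x)$. Thus $\be(L)$ and $\be(L,x)$ agree in every row $\geq k$ and differ only in their top portions. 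Using the order-theoretic fact that over $\mathbf{k}[x,y,z]$ a BS decomposition lists all of its length-$3$ pure diagrams before any length-$\leq 2$ one, and that the latter contribute nothing to column $2$, what must be proved is that the ``column-$2$-free tails'' of $\be(L)$ and $\be(L,x)$ coincide \emph{with matching BS coefficients}.

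To establish this I would run Algorithm \ref{BSalgorithm} on $\be(L)$ and on $\be(L,x)$ in parallel, as in the base step of Section \ref{thm2newsection}, but carrying the parameter $t$ through every coefficient computation: one peels from $\be(L)$ the length-$3$ pure diagrams contributed by the Koszul block (shifted copies of the pure summands of $\be(\mfa)$, as Theorem \ref{thm1} predicts), subtracts, reads off the next top degree sequence, and repeats until the column-$2$ entry of row $k$ is exhausted and no length-$3$ sequence survives. The goal is to show that, whatever the values of $t$ and the $c_{0,j}$, the diagram that remains at that point is exactly the diagram remaining after $\be(L,x)$ has shed the length-$3$ pure diagrams enumerated in Cases I--III of Section \ref{thm2newsection}; once these are matched, one concludes exactly as in that section that the two decompositions agree from then on, which is the assertion of Theorem \ref{thm2} for $L$.

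The principal obstacle is the one already flagged in the text: the case analysis explodes. The parameter $t$ enters multiplicatively into the minima that define the coefficients, so which term of each $\min$ is attained depends on inequalities relating $t$, $k$ and the $c_{0,j}$ \emph{simultaneously}, and these cannot be enumerated in the bounded fashion that succeeded for the endpoints $t = 1$ and $t = k-1$. Consequently the real work of proving the conjecture is to replace the bookkeeping by a uniform argument. I would look for a \emph{rigidity} statement: that the length-$3$ part $R_L$ of the decomposition is forced by column $2$ of $\be(L)$ together with the shape of the top (Koszul) block, so that the tail $\be(L) - R_L$ depends only on data that $\be(L)$ and $\be(L,x)$ share --- in effect, that the length-$<3$ part of a BS decomposition over $\mathbf{k}[x,y,z]$ is insensitive to modifications of the diagram confined to the rows above the $J$-block. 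A proof of such a lemma would settle the conjecture at once and, at the same time, give a conceptual proof of Theorem \ref{thm2} in place of the current case analysis.
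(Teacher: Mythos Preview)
The statement you are attempting to prove is stated in the paper as a \emph{conjecture}, not a theorem: the paper gives no proof, and explicitly says that the case $L = x(x,y,z^t)+J$ with general $J$ ``requires a case analyzing which becomes infeasible.'' So there is nothing on the paper's side to compare against; your proposal must be judged on its own merits as a proof.

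On those merits, there is a genuine gap. Your first two paragraphs correctly isolate the missing case and correctly observe that $\be(L)$ and $\be(L,x)$ agree in all rows $\geq k$; this is a good start. But your third paragraph proposes to run Algorithm~\ref{BSalgorithm} on both diagrams ``carrying the parameter $t$ through every coefficient computation,'' and then your fourth paragraph immediately concedes that this is exactly the obstacle the paper already identified: the case tree branches on simultaneous inequalities in $t$, $k$, and the $c_{0,j}$, and cannot be bounded as it was for $t=1$ and $t=k-1$. You then propose to replace the case analysis by a ``rigidity'' lemma asserting that the length-$<3$ tail of a BS decomposition over $\mathbf{k}[x,y,z]$ is insensitive to modifications confined to rows above the $J$-block. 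You do not prove this lemma, and it is not clear that it is any easier than the conjecture itself: the BS algorithm is greedy and top-down, so changes in the top rows alter which minima are attained at every subsequent step, and there is no a~priori reason the column-$2$-free remainder should be independent of those choices. In effect, the proposed lemma is a reformulation of the target statement rather than a reduction to something simpler.

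To turn this into a proof you would need either to actually establish the rigidity statement (for which you give no argument), or to find some invariant of the diagram---perhaps via the Herzog--K\"uhl relations or the multiplicity constraints that BS decompositions satisfy---that pins down the length-$<3$ tail without tracking the full algorithm. As written, the proposal is a reasonable research plan but not a proof.
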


Theorem \ref{thm2} shows that the ends of the Boij-S\"oderbeg decompositions of $L$ and $(L,x)=(J,x)$ are exactly the same for all Artinian lex ideals $L$ in $R$ except
the ones in the form of $ L = x(x,y,z^t) + J$ where $J$ is different from $(y,z)^{G_{min}(J)}$ and $1 < t < k-1$. 
On the other hand, based on the computations we have done using the \texttt{BoijSoederberg} packages of the computer algebra software \texttt{Macaulay2}, see \cite{M2}, we strongly believe that this result is also
true for the lex ideals in that particular form.

\section{Further Observations and Examples}
\label{observation}
For an Artinian lex ideal $L\subset \mathbf{k}[x,y,z]$ of codimension $3$, we have shown that the summands of length $3$ pure diagrams of the Boij-S\"{o}derberg decomposition of $\mfa$ where $\mfa = L : (x)$, and 
the summands of pure diagrams of length less than $3$ in the Boij-S\"{o}derberg decomposition of $(L,x)$ appear in the decomposition of 
the ideal $L=\mfa(x)+J$ in the beginning and in the end, respectively. 
\[
\be(L) = \left[ \begin{matrix}\text{length $3$ degree} \\ \text{sequences coming }\\ \text{from } \mfa(-1) \end{matrix} \right]  
+ \left[ \begin{matrix}\text{extra length $3$}\\ \text{degree sequences} \end{matrix}\right] + 
\left[ \begin{matrix} \text{all length $<3$  degree} \\ \text{sequences coming} \\ \text{from } (L,x) \end{matrix} \right].
\]
There might be also some other pure diagrams of length $3$ other than the ones coming from the BS decomposition of $\mfa$.
However, how this middle part containing pure diagrams of length $3$ comes out is not quite clear. One might ask whether or not the ideals $\mfb=L:(y)$ and $\mfc=L:(z)$ help to describe the middle part. In fact, examples show that there is a quite strong relation between them as well.
Nevertheless, both the results obtained in sections 3 and 4 about $\be(\mfa)$ and $\be(L,x)$ and the observations we discuss in this section about  $\be(\mfb)$ and $\be(\mfc)$ provide a very close approximation for the BS chain of degree sequences for $\be(L)$. Either our observation in this section is not enough to cover the entire middle part of the decomposition of $\be({L})$ or the BS decompositions of $\mfb$ and $\mfc$ may give redundant degree sequences.

Now in this section we illustrate the elation between the BS decompositions of the ideals $\mfb$, $\mfc$ and $L$ via examples.

\begin{example}  
Let \[
L = (x^2,xy^2,xyz,xz^2,y^8,y^7z,y^6z^2,y^5z^3,y^4z^4,y^3z^5,y^2z^6,yz^7,z^8)
\]
be a lex segment ideal in $R$. Then $\mfa =L:x = (x,y^2,yz,z^2)$ is lex segment ideal such that $L=x\mfa+J$ where $J=(y,z)^8$ is stable in $R$ and lex segment in $\mathbf{k}[y,z]$.
Similarly the ideals 
\[
\mfb =L:y = (x^2,xy,xz,y^7,y^6z,y^5z^2,y^4z^3,y^3z^4,y^2z^5,yz^6,z^7)=L:z=\mfc
\]
are lex segment ideals such that $L=y\mfb+I=z\mfc+K$ where $I=(x^2,xz^2,z^8)$ and $K=(x^2,xy^2,y^8)$.\\
We construct similar short exact sequences like in Lemma \ref{lemma for ses} for the ideals $\mfb$ and $\mfc$. Unlike the case for $\mfa$, we might have cancellations in the mapping cone of the short exact 
sequences for ideals. It means we can have cancellations in the Betti diagram since the mapping cone structure may not yield the minimal free resolution.
This situation causes different degree sequences that do not appear in BS decomposition of $L$. 

We first notice that $\mfb = \mfc$ and find the BS decomposition of $\be(\mfa)$

$$\be(\mfa)=(1)\pi_{(1,3,4)} + [\mbox{pure diags. of length}<3],$$

Then we consider the short exact sequence for the ideals  $\mfb$ and $L= y\mfb+I$

$$\minCDarrowwidth10pt\begin{CD}\label{mappingcone}
0 @>>>        I(-1) @>>>        \mfb(-1)\oplus I @>>> L @>>> 0 \\
@.            @AAA                 @AAA              @AAA     @.\\
@.   R(-3)\oplus R(-4)\oplus R(-9) @.  R(-2)\oplus R^4(-3)\oplus R^9(-8) @. R(-2)\oplus \overbrace{\cancel{R^4(-3)}}^{R^3(-3)}\oplus R^9(-8) @.\\
@.            @AAA                 @AAA             @AAA    @.\\
@.   R(-5)\oplus R(-10)     @.  R^4(-4)\oplus R^{16}(-9)      @.     \cancel{R(-3)}\oplus R^5(-4)\oplus R^{17}(-9) @.\\
@.            @AAA                 @AAA             @AAA     @.\\
@.             0     @.  R(-5)\oplus R^7(-10)      @.     R^2(-5)\oplus R^8(-10) @.\\
@.             @.                @AAA              @AAA     @.\\
@.             @.              0      @.        0     @.  @.
\end{CD}$$

The mapping cone of the short exact sequence for ideals $\mfb$ and $L $ (so the same for $\mfc$ and $L = z\mfc+K$) ends up with ``one'' cancellation in the first degree. So 
we interpret this as ignoring one pure diagram at the beginning, which is the one corresponding to the degree sequence $(2,3,4)$ at the beginning of the decomposition of $\be(\mfb)$.
Therefore,
$$\be(\mfb)=\be(\mfc)=\cancel{(1)\pi_{(2,3,4)}} + (\frac{1}{7})\pi_{(2,3,9)} + (\frac{8}{7})\pi_{(2,8,9)} + [\mbox{pure diags. length}<3].$$

The pure diagrams of length less than $3$ are coming from the ideal 
$$\be(L,x) = [\mbox{length $3$ pure diags.}] + (8)\pi_{(8,9)}+(1)\pi_{(8)}.$$
Hence we claim that the summands (with coefficients) in the BS decomposition of $\be(L)$ are
$$\be(L) \approx \underbrace{(1)\pi_{(2,4,5)}}_{\mbox{from}\ \mfa(-1)} + (\al_{2})\underbrace{\pi_{(3,4,10)}}_{\mbox{from}\ \mfb(-1)} + 
(\al_{3})\underbrace{\pi_{(3,9,10)}}_{\mbox{from}\ \mfb(-1)} + \underbrace{(8)\pi_{(8,9)} + (1)\pi_{(8)}}_{\mbox{from}\ (L,x)},$$
for some coefficients $\al_2, \al_3$ in $\mathbb{Q}$.
Indeed, the BS decomposition of $L$ is, 
$$\be(L) = (1)\pi_{(2,4,5)}+(\frac{2}{7})\pi_{(3,4,10)}+(\frac{9}{7})\pi_{(3,9,10)}+(8)\pi_{(8,9)}+(1)\pi_{(8)}.$$

The impressive point of this example is that  we are able to describe the entire BS chain of degree sequences of $L$ from its the colon ideals $\mfa$, $\mfb$
and the ideal $(L,x)$.  

\end{example}

\begin{example}\label{longer decomp}
This example shows that some different situations might occur other than the previous example. 

Let $L = (x^2, xy^2, xyz, xz^2, y^4, y^3z, y^2z^2, yz^6, z^9)$ be lex-segment ideal in $R$. Then 
$\mfa = L : x = (x,y^2, yz, z^2),  
\mfb = L : y = (x^2, xy, xz, y^3, y^2z, yz^2, z^6)$ and $\mfc = L : (z) = (x^2, xy, xz, y^3, y^2z, yz^5, z^8)$.
We observe that one cancellation occurs in the mapping cone process of each ideal $\mfb$ and $\mfc$. BS decompositions of $\mfa$, $\mfb$, $\mfc$  and $(L,x)$ are 
\begin{align*}
\be(\mfa) &= 1\pi_{(1,3,4)} + [\mbox{pure diags. of length }<3],\\
\be(\mfb) &= \cancel{1\pi_{(2,3,4)}} + \frac{1}{3}\pi_{(2,3,5)} + \frac{5}{6}\pi_{(2,4,5)} + \frac{1}{4}\pi_{(2,4,8)} + \frac{7}{20}\pi_{(3,4,8)} \\
&\, \, \, \, + \frac{1}{10}\pi_{(3,7,8)} +[\mbox{pure diags. of length }<3],\\
\be(\mfc) &= \cancel{1\pi_{(2,3,4)}} + \frac{1}{3}\pi_{(2,3,5)} + \frac{1}{3}\pi_{(2,4,5)} + \frac{1}{2}\pi_{(2,4,8)} + \frac{1}{10}\pi_{(3,4,8)} +  \frac{1}{10}\pi_{(3,7,8)} \\
&\, \, \, \, +\frac{3}{14}\pi_{(3,7,10)} + \frac{1}{42}\pi_{(3,9,10)} + [\mbox{pure diags. of length }<3]
\end{align*}
and
\[
\be(L,x) = [\mbox{pure diags. of length } 3] + 1\pi_{(4,10)} + 1\pi_{(7,10)} + 1\pi_{(9)}.
\]
So, the BS decomposition for the ideal $L$ is likely to be
\begin{align*}
\be(L)&\approx 1\pi_{(2,4,5)} + \al_2\pi_{(3,4,6)} + \al_3\pi_{(3,5,6)} + \al_4\pi_{(3,5,9)} + \al_5\pi_{(4,5,9)} \\
&\, \, \, \, \, + \al_6\pi_{(4,8,9)} + \al_7\pi_{(4,8,11)} + \mathbf{\al_8\pi_{(4,10,11)}}+ 1\pi_{(4,10)} + 1\pi_{(7,10)} + 1\pi_{(9)}
\end{align*}
where $\al_i \in \mathbb{Q} ,\ \ i=2,...8$. Thus it seems that we almost obtain the actual BS decomposition for $L$ which is
\begin{align*}
\be(L) & =\underbrace{1\pi_{(2,4,5)}}_{\mbox{from}\ \ \mfa(-1)} + \frac{2}{3}\underbrace{\pi_{(3,4,6)}}_{\mbox{from}\ \ \mfb(-1) \ \mbox{and}\ \mfc(-1)} + 
\frac{2}{3}\underbrace{\pi_{(3,5,6)}}_{\mbox{from}\ \ \mfb(-1) \ \mbox{and}\ \mfc(-1)}\\
 &\, \, \, \, \, + \frac{1}{2}\underbrace{\pi_{(3,5,9)}}_{\mbox{from}\ \ \mfb(-1) \ \mbox{and}\ \mfc(-1)}
+\frac{3}{10}\underbrace{\pi_{(4,5,9)}}_{\mbox{from}\ \ \mfb(-1) \ \mbox{and}\ \mfc(-1)} + \frac{1}{20}\underbrace{\pi_{(4,8,9)}}_{\mbox{from}\ \ \mfb(-1) \ \mbox{and}\ \mfc(-1)} \\
&\,\,\,\,\, +\frac{1}{4}\underbrace{\pi_{(4,8,11)}}_{\mbox{from}\ \ \mfc(-1)}
+ \underbrace{1\pi_{(4,10)} + 1\pi_{(7,10)} + 1\pi_{(9)}}_{\mbox{from}\ \ (L,x)}.
\end{align*}
Apparently, the BS decomposition of $\mfc$ provides an additional pure diagram, $\pi_{(4,10,11)}$, which does not appear in the BS decomposition of $L$.
Nevertheless it still supports the idea of the covering up the middle part of the decomposition of $\be(L)$ by using decompositions of $\be(\mfb)$ and $\be(\mfc)$. 
\end{example}

\begin{example}\label{comp exp}
In the previous example we saw that our approximation for $L$ gives a longer BS chain of the degree sequences than the actual BS chain of the degree sequences via the BS decomposition
of the ideals $\mfa$, $\mfb$, $\mfc$ and $(L,x)$. 

Consider the lex-segment ideal $L=(x^2, xy, xz^2, y^6, y^5z, y^4z^3, y^3z^4, y^2z^5, yz^6, z^9)$ in $R$. Then the colon ideals are $\mfa = L : x = (x,y,z^2)$, $\mfb = L : y = (x, y^5, y^4z, y^3z^3, y^2z^4, yz^5, z^6)$, and $\mfc = L : z = 
(x^2, xy, xz, y^5, y^4z^2, y^3z^3, y^2z^4, yz^5, z^8)$.
The mapping cone for the ideal $\mfc$ requires two cancellations, so we ignore the first two degree sequences. Then, 
\begin{align*}
\be(\mfa) &= \frac{1}{3}\pi_{(1,2,4)} + \frac{1}{3}\pi_{(1,3,4)} + [\mbox{pure diags. of length }<3],\\
\be(\mfb) &= \frac{1}{5}\pi_{(1,6,7)} + \frac{9}{35}\pi_{(1,6,8)} + \frac{2}{7}\mathbf{\pi_{(1,7,8)}} + \frac{1}{2}\mathbf{\pi_{(5,7,8)}} + [\mbox{pure diags. of length }<3],\\
\be(\mfc) &= \cancel{1\pi_{(2,3,4)}} + \cancel{\frac{1}{6}\pi_{(2,3,8)}} + \frac{1}{3}\pi_{(2,6,8)} + \frac{19}{30}\pi_{(2,7,8)} 
+\frac{1}{15}\mathbf{\pi_{(2,7,10)}} + \frac{1}{3}\pi_{(5,7,10)} \\
& \,\,\,\,\,+ [\mbox{pure diags. of length }<3], \mbox{ and }\\ 
\be(L,x) &= [\mbox{pure diags. length}\ 3] + \frac{1}{2}\pi_{(6,8)} + 2\pi_{(7,8)} + 2\pi_{(7,10)} + 1\pi_{(9)}.
\end{align*}

Then, we get the following chain of degree sequences in order to set up the approximate BS decomposition for $L$
\begin{eqnarray*}\be(L) &\approx & \underbrace{(2,3,5) < (2,4,5)}_{\mbox{from} \ \mfa(-1)} < \underbrace{(2,7,8)  < (2,7,9) < {\bf (2,8,9)< (6,8,9)}}_{\mbox{from} \ \mfb(-1)}\\
 &<& \underbrace{(3,7,9)<(3,8,9)<{\bf (3,8,11)}<(6,8,11)}_{\mbox{from} \ \mfc(-1)} < \underbrace{(7,9)< (8,9)<(8,11)< (10)}_{\mbox{from} \ (L,x)}.
\end{eqnarray*}

However, we know that the degree sequences in the decomposition must be a partial ordered chain, so  the ones that violate the partial order are needed to be eliminated. 
From the decomposition of $\be(\mfc)$, we get $(3,7,9)$ as the first degree sequence, 
but $(2,8,9)$ and $(6,8,9)$ cannot be before $(3,7,9)$. So we have to ignore the sequences $(2,8,9)$ and $(6,8,9)$. Then we get an approximate decomposition such as
\begin{align*}
\be(L)&\approx \frac{1}{3}\pi_{(2,3,5)} + \frac{1}{3}\pi_{(2,4,5)} + \al_3\pi_{(2,7,8)} + \al_4\pi_{(2,7,9)}+  
\al_7\pi_{(3,7,9)} \\
&\,\,\,\,\, + \al_8\pi_{(3,8,9)} + \al_9\pi_{(3,8,11)} +\al_{10}\pi_{(6,8,11)} +\frac{1}{2}\pi_{(6,8)} + \frac{1}{2}\pi_{(7,9)} \\
&\,\,\,\,\,+ 2\pi_{(8,9)} + 2\pi_{(8,11)} + 1\pi_{(10)}.
\end{align*}
The BS decomposition of $\be(L)$ is
\begin{align*}
\be(L) &= \frac{1}{3}\pi_{(2,3,5)} + \frac{1}{3}\pi_{(2,4,5)} + \mathbf{\frac{1}{3}\pi_{(2,4,8)}} + \frac{2}{15}\pi_{(2,7,8)} + \frac{1}{10}\pi_{(2,7,9)}\\
&\,\,\,\,\,+  
\frac{1}{2}\pi_{(3,7,9)} + \frac{1}{2}\pi_{(3,8,9)} +\frac{1}{2}\pi_{(6,8,11)} + 
\frac{1}{2}\pi_{(6,8)} + \frac{1}{2}\pi_{(7,9)} \\
&\,\,\,\,\,+ 2\pi_{(8,9)} + 2\pi_{(8,11)} + 1\pi_{(10)}.
\end{align*}
The degree sequence $(3,8,11)$ associated with $(2,7,10)$, which is coming from the decomposition of $\be(\mfc)$, does not show up in the decomposition of $\be(L)$, 
similar to the situation in Example \ref{longer decomp}.
Moreover, for this lex ideal $L$, we realize another different situation. The degree sequence $(2,4,8)$ shows up in the BS chain of degree sequences of $\be(L)$, but $(2-1, 4-1, 8-1) = (1,3,7)$ 
does not appear in any of the decompositions of $\be(\mfa)$, $\be(\mfb)$ and $\be(\mfc)$.

An explanation for that extra degree sequence $(2,4,8)$ might be possible for this example. We see that $(2,4,5)$ is the last degree sequence coming from $\mfa(-1)$ and 
the next degree sequence $(2,7,8)$ is from $\mfb(-1)$. If we assume that there is no other degree sequence between $(2,4,5)$ and $(2,7,8)$, it implies
that simultaneous elimination of the entries in the positions of $\be_{1,4}$ and $\be_{2,5}$ in the Betti diagram of $L$ by the algorithm of BS decomposition.
However, this is not possible because otherwise there would not be a pure diagram of length $2$ in the BS decomposition of $\mfa$.
Hence again by the partial order, it must be $(2,4,5)<{\bf (2,4,8)}<(2,7,8)$.

\end{example}

Examples \ref{longer decomp} and \ref{comp exp} show that the BS decompositions of $\mfa$, $\mfb$, $\mfc$ and $(L,x)$ 
may not be enough to provide the entire chain of degree sequences in the BS decomposition of $L$. Therefore, it is possible that there are some gaps and redundant
degree sequences in the approximation of BS chain of degree sequences of $L$. In view of the explanations, such as the cancellations in mapping cone, the necessity of the order of the chain of the degree sequences, we are able to provide the entire chain of degree sequences in the BS decomposition of $L$. 

\begin{problem} Is it true that the the Boij-S\"oderberg decomposition of a lex ideal $L$ can be described by the decompositions of its colon ideals $\mfa$, $\mfb$, $\mfc$ and $(L,x)$ precisely. That is, in terms of all the pure diagrams and their coefficients?

The relation between BS decompositions of a (Artinian) lex-segment ideal $L$ and the lex ideals $\mfa = L : x$ and $(L,x)$ is pointed out in Theorems \ref{thm1} and \ref{thm2}.
Furthermore, the examples we have observed in this section show that if we know the BS decompositions of the colon
ideals $\mfb = L : y$ and $\mfc=L:z$, then almost the entire BS chain of the degree sequences for the lex-segment ideal $L$ may be revealed.
In other words, we try formalize the full chain of degree sequences of the BS decomposition of the ideal $L$ by using the BS chains of degree sequences of the colon ideals $\mfa$,$\mfb$, $\mfc$ and the lex ideal $(L,x)$. Studying what the observations indicate is a further direction for our research on BS decomposition of lex-segment ideals. 

A natural follow-up work which aims to describe all BS coefficients of a lex ideal $L$ in terms of the coefficients of its colon ideals $\mfa, \mfb, \mfc$
and the larger lex ideal $(L,x)$ may arise at this point, i.e., we narrow our attention on the degree
sequences, that is, pure diagrams. Although the results about $\mfa$ and $(L,x)$ involve the coefficients as well, we do not have a foresight regarding a relation between the coefficients in the decompositions of $\mfb$, $\mfc$ and $L$ based on the observations mentioned in this section.  

\end{problem}

\section*{Acknowledgment}
The author would like to thank to her Ph.D.
adviser Uwe Nagel for proposing the problem about Boij-S\"oderberg decompositions of lex-segment ideals and his contributions and discussions during the course of preparing this manuscript. 
She would also like to thank Daniel Erman for introducing her to the Boij-S\"oderberg Theory during a 2011 Summer Graduate Workshop at MSRI.


\begin{thebibliography}{10}

\bibitem{BerErmKumSam}
Christine Berkesch, Daniel Erman, Manoj Kummini, and Steven~V. Sam.
\newblock Poset structures in {B}oij-{S}\"oderberg theory.
\newblock {\em Int. Math. Res. Not. IMRN}, (22):5132--5160, 2012.

\bibitem{Bi}
Anna~Maria Bigatti.
\newblock Upper bounds for the {B}etti numbers of a given {H}ilbert function.
\newblock {\em Comm. Algebra}, 21(7):2317--2334, 1993.

\bibitem{BS}
Mats Boij and Jonas S{\"o}derberg.
\newblock Graded {B}etti numbers of {C}ohen-{M}acaulay modules and the
  multiplicity conjecture.
\newblock {\em J. Lond. Math. Soc. (2)}, 78(1):85--106, 2008.

\bibitem{BS2}
Mats Boij and Jonas S{\"o}derberg.
\newblock Betti numbers of graded modules and the multiplicity conjecture in
  the non-{C}ohen-{M}acaulay case.
\newblock {\em Algebra Number Theory}, 6(3):437--454, 2012.

\bibitem{Cook}
David Cook, II.
\newblock The structure of the {B}oij-{S}\"oderberg posets.
\newblock {\em Proc. Amer. Math. Soc.}, 139(6):2009--2015, 2011.

\bibitem{ES}
David Eisenbud and Frank-Olaf Schreyer.
\newblock Betti numbers of graded modules and cohomology of vector bundles.
\newblock {\em J. Amer. Math. Soc.}, 22(3):859--888, 2009.

\bibitem{EK}
Shalom Eliahou and Michel Kervaire.
\newblock Minimal resolutions of some monomial ideals.
\newblock {\em J. Algebra}, 129(1):1--25, 1990.

\bibitem{F}
Gunnar Fl{\o}ystad.
\newblock Boij-{S}\"oderberg theory: introduction and survey.
\newblock In {\em Progress in commutative algebra 1}, pages 1--54. de Gruyter,
  Berlin, 2012.

\bibitem{FMS}
Chirstopher Francisco, Jeffrey Mermin, and Jay Scheweig.
\newblock Boij-s{\"o}derberg and veronese decompositions.
\newblock
  \url{https://math.okstate.edu/people/mermin/papers/Boij-Soderberg_and_Veronese_decompositions.pdf}.

\bibitem{GJMRSW}
Courtney Gibbons, Jack Jeffries, Sarah Mayes, Claudiu Raicu, Branden Stone, and
  Bryan White.
\newblock Non-simplicial decompositions of betti diagrams of complete
  intersections.
\newblock {\em arXiv:1301.3441}.

\bibitem{M2}
Daniel~R. Grayson and Michael~E. Stillman.
\newblock Macaulay2, a software system for research in algebraic geometry.
\newblock \url{http://www.math.uiuc.edu/Macaulay2/}.

\bibitem{HK}
J.~Herzog and M.~K{\"u}hl.
\newblock On the {B}etti numbers of finite pure and linear resolutions.
\newblock {\em Comm. Algebra}, 12(13-14):1627--1646, 1984.

\bibitem{Hu}
Heather~A. Hulett.
\newblock Maximum {B}etti numbers of homogeneous ideals with a given {H}ilbert
  function.
\newblock {\em Comm. Algebra}, 21(7):2335--2350, 1993.

\bibitem{NaS}
Uwe Nagel and Stephen Sturgeon.
\newblock Combinatorial interpretations of some {B}oij-{S}\"oderberg
  decompositions.
\newblock {\em J. Algebra}, 381:54--72, 2013.

\bibitem{Pa}
Keith Pardue.
\newblock Deformation classes of graded modules and maximal {B}etti numbers.
\newblock {\em Illinois J. Math.}, 40(4):564--585, 1996.

\end{thebibliography}

\end{document}